\newtheorem{theorem}{Theorem}[section]
\newtheorem{lemma}[theorem]{Lemma}
\newtheorem{proposition}[theorem]{Proposition}
\newtheorem{corollary}[theorem]{Corollary}
\theoremstyle{definition}
\theoremstyle{remark}
\newtheorem{remark}[theorem]{Remark}
\numberwithin{equation}{section}
\newcommand{\ba}{\begin{array}}
\newcommand{\ea}{\end{array}}
\newcommand{\f}{\frac}
\newcommand{\Om}{\Omega}
\newcommand{\cOm}{\overline{\Omega}}
\newcommand{\la}{\lambda}
\newcommand{\R}{{\mathbb R}}
\newcommand{\ds}{\displaystyle}
\begin{document}
\date{}
\title{ \bf\large{Dynamics of a diffusive predator-prey model: the effect of conversion rate}\footnote{This research is supported by the National Natural Science Foundation of China (Nos. 11371111 and 11301111)}}
 \author{Shanshan Chen\footnote{Email: chenss@hit.edu.cn},\ \  Junjie Wei\footnote{Corresponding Author, Email: weijj@hit.edu.cn},\ \ Jianhui Zhang
 \\
{\small  Department of Mathematics,  Harbin Institute of Technology,\hfill{\ }}\\
 {\small Weihai, Shandong, 264209, P.R.China.\hfill{\ }}}
\maketitle

\begin{abstract}
{A general diffusive predator-prey model is investigated in this paper.
We prove the global attractivity of constant equilibria when the conversion rate is small, and the
non-existence of non-constant positive steady states
when the conversion rate is large. The results are applied to several predator-prey models and give some ranges of parameters where complex pattern formation cannot occur.}
\noindent {\bf{Keywords}}: Reaction-diffusion; Non-existence; Steady state; Global attractivity
\end{abstract}

\section{Introduction}

During the past decades predator-prey interaction has been investigated extensively, and
there are several reaction-diffusion equations modelling the predator-prey interaction, see \cite{Du-Hsu,Du-Lou3,Du-Lou,Guo-Wu,Ko-Ryu,Leung,Yi} and references therein. The spatiotemporal patterns
induced by diffusion, such as Turing pattern, can be used to explain the complex phenomenon in ecology.
A prototypical one is the following diffusive predator-prey system with Holling type-II functional response
\begin{equation}\label{1.1}
\begin{cases}
  \ds\frac{\partial u}{\partial t}=d_1\Delta u+au\left(1-\ds\frac{u}{k}\right)-\ds\frac{buv}{1+mu}, & x\in \Omega,\; t>0,\\
 \ds\frac{\partial v}{\partial t}=d_2\Delta v-\theta v+\ds\frac{euv}{1+mu}, & x\in\Omega,\; t>0, \\
   \partial_\nu u=\partial_\nu v=0,& x\in \partial \Omega,\;
 t>0,\\
u(x,0)=u_0(x)\ge(\not\equiv)0, \;\; v(x,0)=v_0(x)\ge(\not\equiv)0,& x\in\Omega.
\end{cases}
\end{equation}
Yi, Wei and Shi \cite{Yi} investigated the Hopf and steady state bifurcations near the unique positive equilibrium of system \eqref{1.1}. Peng and Shi \cite{Peng-Shi} proved that the global bifurcating branches of steady state solutions are bounded loops containing at least two bifurcation
points, which improved the result in \cite{Yi}.
Ko and Ryu \cite{Ko-Ryu} investigated the dynamics of system \eqref{1.1} with a prey refuge. For the case of the homogeneous Dirichlet boundary condition, Zhou and Mu \cite{muzhou} showed the existence of positive steady states through bifurcation theory and fixed point index theory. Recently, Wang, Wei and Shi
\cite{Wangws} studied a diffusive predator-prey model in the following general form
\begin{equation}\label{1.1w}
\begin{cases}
  \ds\frac{\partial u}{\partial t}=d_1\Delta u+g(u)\left(f(u)-v\right), & x\in \Omega,\; t>0,\\
 \ds\frac{\partial v}{\partial t}=d_2\Delta v+v\left(-\theta +g(u)\right), & x\in\Omega,\; t>0,\\
   \partial_\nu u=\partial_\nu v=0,& x\in \partial \Omega,\;
 t>0,\\
u(x,0)=u_0(x)\ge(\not\equiv)0, \;\; v(x,0)=v_0(x)\ge(\not\equiv)0,& x\in\Omega,
\end{cases}
\end{equation}
where predator functional response $g(u)$ is increasing. They investigated the Hopf and steady state bifurcations near the unique positive equilibrium of system \eqref{1.1w}, and the existence and non-existence of non-constant positive steady states were also addressed with respect to the diffusion coefficients $d_1$ and $d_2$. Similar results on the Hopf and steady state bifurcations near the positive equilibrium can be found in \cite{Jin,Peng-Yi,Wangws2}.
Moreover, a non-monotonic functional response was proposed to model the prey's group defense, see \cite{Freed,Wolk}. That is, the predator functional response in model \eqref{1.1w} is non-monotonic and can be chosen as follows:
\begin{equation}\label{type4}
\text{(Holling type-IV)}\;\; g(u)=\ds\f{bu}{1+nu+m u^2},\;\;\text{where}\;\;b,n,m>0.
\end{equation}
Related to the work on model \eqref{1.1w} with non-monotonic functional response, see \cite{pangw,RuanX1,RuanX2,ZhuC} and references therein.
Another prototypical predator-prey model has the following from
\begin{equation}\label{DL}
\begin{cases}
  \ds\frac{\partial u}{\partial t}-d_1\Delta u=u(a-u)-\ds\frac{buv}{1+mu}, & x\in \Omega,\; t>0,\\
 \ds\frac{\partial v}{\partial t}-d_2\Delta v=v(d- v)+\ds\frac{euv}{1+mu}, & x\in\Omega,\; t>0,\\
    \partial_\nu u=\partial_\nu v=0,& x\in \partial \Omega,\;
 t>0,\\
u(x,0)=u_0(x)\ge(\not\equiv)0, \;\; v(x,0)=v_0(x)\ge(\not\equiv)0,& x\in\Omega.
\end{cases}
\end{equation}
Here the growth rate of the predator is logistic type in the absence of prey, and it is different from models \eqref{1.1} and \eqref{1.1w} that the predator can survive without the specific prey. For $m=0$, Leung \cite{Leung} proved the global attractivity of constant equilibria, which still holds when saturation $m$ is small \cite{Brown,Motoni,Du-Lou}.
Du and Lou \cite{Du-Lou} investigated the existence and non-existence of non-constant steady states when saturation $m$ is large, and see also \cite{Du-Lou3} for the case of the homogeneous Dirichlet boundary condition. Peng and Shi \cite{Peng-Shi} proved the non-existence of non-constant positive steady states.
 Moreover, Yang, Wu and Nie \cite{Yang-Wu} considered a diffusive predator-prey model under the homogeneous Dirichlet boundary condition, where the growth rate of the predator is like a Beverton-Holt function, and see \cite{Cheny} for the case of the homogeneous Neumman boundary condition.

Motivated by the above work of \cite{Du-Lou} and \cite{Wangws}, we analyze a diffusive predator-prey model in the following general form
\begin{equation}\label{1.4}
\begin{cases}
  \ds\frac{\partial u}{\partial t}=d_1\Delta u+g(u)\left(f(u)-v\right), & x\in \Omega,\; t>0,\\
 \ds\frac{\partial v}{\partial t}=d_2\Delta v+v\left(h(v) +cg(u)\right), & x\in\Omega,\; t>0, \\
   \partial_\nu u=\partial_\nu v=0,& x\in \partial \Omega,\;
 t>0,\\
u(x,0)=u_0(x)\ge(\not\equiv)0, \;\; v(x,0)=v_0(x)\ge(\not\equiv)0,& x\in\Omega.
\end{cases}
\end{equation}
Here $\Omega$ is a bounded domain in $\mathbb{R}^N$ ($N\le 3$) with a
smooth boundary $\partial \Omega$, $f(u)g(u)$ is the growth rate of prey without predator, $g(u)$ is the predator functional response, $c>0$ is the conversion rate, and $h(v)$ is the growth rate per capita of the predator in the absence of prey. For system \eqref{1.4}, we see that $(u,v)$ is a constant positive equilibrium if and only if $u\in(0,a)$ is a solution of the following equation
\begin{equation}
H(u):=h\left(f(u)\right)+cg(u)=0.
\end{equation}
However, it is hard to determine the exact numbers and expressions of constant positive equilibria of system \eqref{1.4}, not to mention the bifurcations near constant positive equilibria.
In this paper, we first assume that $f$ and $g$ satisfy the following assumptions:
\begin{enumerate}
\item[$(\mathbf{A_1})$] $f\in C^1(\overline{\mathbb {R}^+})$ and there exists a unique $a>0$ such that $f(u)$ is positive for $u\in[0,a)$ and negative for $u>a$. Moreover, $f'(u)<0$ on $(0,a]$ or there exists $\la\in(0,a)$ such that
    $f'(u)>0$ on $(0,\la)$ and $f'(u)<0$ on $(\la,a]$,
\item[$(\mathbf {A_2})$] $g\in C^1(\overline{\mathbb {R}^+})$, $g(0)=0$, and $g'(u)>0$ for $u\ge0$,
\end{enumerate}
and investigate the effect of small conversion rate on the positive steady states of system \eqref{1.4}.
There are several examples of $f$ and $g$ satisfying assumptions $(\mathbf{A_1})$ and $(\mathbf{A_2})$. For example,
\begin{enumerate}
\item [(1)] Richards growth rate for prey and Holling type-II functional response:
\begin{equation*}
  f(u)=\ds\f{\gamma(1+mu)(a-u^p)}{b}\;\;\text{and}\;\;g(u)=\ds\f{bu}{1+mu},
\end{equation*}
where $a,b,m,\gamma>0$ and $p\ge1$;
\item [(2)] weak Allee effect in prey and Holling type-II functional response:
\begin{equation*}
  f(u)=\ds\f{\gamma(1+mu)(a-u)(u+p)}{b}\;\;\text{and}\;\;g(u)=\ds\f{bu}{1+mu},
  \end{equation*}
  where $a,b,p,m,\gamma>0$ and $a>p$;
\item [(3)] logistic growth rate for prey and Ivlve type functional response:
\begin{equation*}
 f(u)=\begin{cases}
   \ds\f{\gamma u(a-u)}{\alpha\left(1-e^{-\beta u}\right)}, & \text{ for } u\ne 0,\\
  \ds\f{a\gamma}{\alpha\beta}, & \text{ for }u=0,
\end{cases}\;\;\text{and}\;\;g(u)=\alpha\left(1-e^{-\beta u}\right),
\end{equation*}
where $a,\alpha,\beta,\gamma>0$.
\end{enumerate}
Then, we consider the case that $f$ and $g$ satisfy the following assumptions:
\begin{enumerate}
\item[$(\mathbf{A'_1})$] $f\in C^1(\overline{\mathbb {R}^+})$ and there exists a unique $a>0$ such that $f(u)$ is positive for $u\in[0,a)$ and negative for $u>a$,
\item[$(\mathbf{A'_2})$] $g\in C^2(\overline{\mathbb {R}^+})$, $g(0)=0$, $g'(0)>0$ and $g(u)>0$ for $u>0$,
\end{enumerate}
and investigate the effect of large conversion rate on the positive steady states of system \eqref{1.4}. Here we remark that $(\mathbf{A'_1})$ is more general that $(\mathbf{A_1})$, $g$ may be nonmonotonic for $(\mathbf{A'_2})$, and the assumption that $g\in C^2$ is needed to guarantee the regularity of the positive steady states. There are also several examples of $f$ and $g$ satisfying assumptions $(\mathbf{A'_1})$ and $(\mathbf{A'_2})$. For example, \begin{enumerate}
\item [(1)] logistic growth rate for prey and Holling type-IV functional response:
\begin{equation*}
  f(u)=\ds\f{\gamma(1+nu+mu^2)(a-u)}{b}\;\;\text{and}\;\;g(u)=\ds\f{bu}{1+nu+m u^2},
\end{equation*}
where $a,b,m,n,\gamma>0$.
\end{enumerate}
The rest of the paper is organized as follows.
In Section 2, we show the global attractivity of constant equilibria of system \eqref{1.4} when the conversion rate is small, which also implies the non-existence of non-constant positive steady states.
In Section 3, we prove the non-existence of non-constant positive steady states of system \eqref{1.4} when the conversion rate is large. In Section 4, we apply the obtained theoretical results to some concrete examples.

\section {The case of small conversion rate}
 In this section, we investigate the positive steady states of system \eqref{1.4} when conversion rate $c$ is small.
 Throughout this section, we assume that $h$ satisfies the following assumption
\begin{enumerate}
\item[$(\mathbf{A_3})$] $h\in C^1(\overline{\mathbb {R}^+})$, and there exists a unique $d>0$ such that $h(v)$ is positive for $v\in[0,d)$, and negative for $v>d$. Moreover, $h'(v)< 0$ for $v\ge d$.
\end{enumerate}
Then $h(v)$ has a inverse function, denoted by $h^{-1}$, when $v\in[d,\infty)$.
 We first recall the following well-known result for later application.
 \begin{lemma}\label{L2.0}
 Assume that $H:(0,\infty)\to \R$ is a smooth function satisfying $H(w)(w-w_0)<0$ for any $w>0$ and $w\ne w_0$.
If $w(x,t)$ satisfies the following problem
\begin{equation*}\label{aux}
\begin{cases}
\displaystyle \frac{\partial w}{\partial t}=d\Delta w+H(w),  & \;\; x\in\Omega,\; t>t_0,\\
\displaystyle \frac{\partial w(x,t)}{\partial \nu}=0, & \;\; x\in \partial\Omega,\; t>t_0,\\
\displaystyle  w(x,t_0)\ge(\not\equiv)0, & \;\; x\in \Om,
\end{cases}
\end{equation*}
where $d>0$, $t_0\in \R^+$, then $w(x,t)$ exists for all $t>t_0$, and $w(x,t)\rightarrow w_0$ uniformly for $x\in \overline
\Omega$ as $t\rightarrow \infty$.
\end{lemma}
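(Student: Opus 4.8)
The plan is to establish this as a standard scalar-equation attractivity result, proved via the method of upper and lower solutions combined with a comparison argument. The key structural hypothesis is $H(w)(w-w_0)<0$ for all $w\neq w_0$, which says $w_0$ is the unique zero of $H$, with $H>0$ to the left of $w_0$ and $H<0$ to the right; this is exactly the sign condition guaranteeing that $w_0$ is a globally attracting equilibrium of the ODE $\dot w = H(w)$. First I would invoke standard parabolic theory to guarantee local existence and smoothness of $w(x,t)$ for $t>t_0$, together with the strong maximum principle and the Hopf boundary lemma applied to the Neumann problem: since $w(x,t_0)\ge(\not\equiv)0$, the solution becomes strictly positive for $t>t_0$, so we may work on the domain where $H$ is defined.

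The core of the argument is a squeeze between two spatially homogeneous solutions of the associated ODE. Let $\overline{M}$ be an upper bound for $w(x,t_1)$ at some time $t_1>t_0$ (available from the comparison principle and the structure of $H$, which forces $H(w)<0$ for large $w$), and let $\overline{w}(t)$ solve $\dot{\overline w}=H(\overline w)$ with $\overline w(t_1)=\overline M$. Because $\overline w(t)$ is a spatially constant supersolution of the PDE, the comparison principle gives $w(x,t)\le \overline w(t)$ for all $x\in\cOm$, $t\ge t_1$. By the assumed sign condition, the ODE trajectory $\overline w(t)$ decreases monotonically to $w_0$ as $t\to\infty$. Symmetrically, choosing a strictly positive constant lower bound $\underline m$ for $w(x,t_1)$ and solving $\dot{\underline w}=H(\underline w)$ with $\underline w(t_1)=\underline m$ produces a subsolution $\underline w(t)\le w(x,t)$ that increases monotonically up to $w_0$. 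Combining the two inequalities yields $\underline w(t)\le w(x,t)\le \overline w(t)$, and letting $t\to\infty$ forces $w(x,t)\to w_0$ uniformly in $x\in\cOm$.

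The step I expect to be the main technical obstacle is obtaining the uniform positive lower bound $\underline m>0$ at the time $t_1$; the initial data is only assumed nonnegative and not identically zero, so positivity at $t_1$ must be extracted from the strong maximum principle rather than assumed, and one must verify that $H$ is bounded below near $0$ so that the lower ODE solution does not escape to a spurious equilibrium or leave the region of definition. The monotone convergence of the ODE trajectories $\overline w(t)\downarrow w_0$ and $\underline w(t)\uparrow w_0$ follows from the sign of $H$ and the fact that $w_0$ is the only equilibrium, so this part is routine once the comparison framework is in place. Since this is stated as a well-known result recalled for later application, I would present the squeeze construction cleanly and cite standard references for the comparison principle and parabolic regularity rather than reprove them.
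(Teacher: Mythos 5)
Your proposal is correct, and there is nothing in the paper to diverge from: the paper states this lemma as a well-known result and gives no proof at all (it only cites the upper--lower solution literature, e.g.\ Pao's book, later on). Your squeeze argument --- strong maximum principle to get a strictly positive minimum $\underline m$ and a finite maximum $\overline M$ at some $t_1>t_0$, then trapping $w(x,t)$ between the spatially homogeneous ODE solutions through $\underline m$ and $\overline M$, both of which converge monotonically to the unique equilibrium $w_0$ by the sign condition $H(w)(w-w_0)<0$ --- is exactly the standard proof this lemma implicitly rests on, and you correctly identify the only delicate point, namely extracting the positive lower bound from merely nonnegative, nontrivial initial data.
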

From Lemma \ref{L2.0}, we give the exact asymptotic bounds of the solutions for system \eqref{1.4}.
\begin{lemma}\label{L2.1}
Assume that $f$, $g$ and $h$ satisfy assumptions $(\mathbf{A_1})$, $(\mathbf{A_2})$ and $(\mathbf{A_3})$. If   $h(f(0))<-cg(a)$, then
there exist $(\underline u,\underline v),\;(\overline
u,\overline v)>(0,0)$ satisfy
\begin{equation}\label{re}
 \begin{split}
       &f(\overline u)-\underline v\le0 ,\;\;h(\overline v)+cg(\overline u)\le0, \\
      &f(\underline u)-\overline v\ge0 ,\;\;h(\underline v)+cg(\underline u)\ge0. \\
    \end{split}
\end{equation}
Moreover, for any initial value $\phi=(u_0(x),v_0(x))$, where $u_0(x)\ge(\not\equiv)0$,
$v_0(x)\ge(\not\equiv)0$ for all $x\in \cOm$, there exists $t_0(\phi)>0$ such that
the corresponding solution $(u(x,t),v(x,t))$ of system \eqref{1.4}
satisfies
\begin{equation}\label{re2}
(\underline u,\underline v)\le(u(x,t),v(x,t))\le(\overline
u,\overline v)
\end{equation}
for any $t>t_0(\phi)$.
\end{lemma}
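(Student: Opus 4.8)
The plan is to obtain the four numbers of \eqref{re} by a four-step sequential comparison argument, each step reducing one one-sided bound on one component of \eqref{1.4} to a scalar reaction-diffusion equation to which Lemma \ref{L2.0} applies, and then to read off $\overline u,\overline v,\underline u,\underline v$ as the corresponding scalar equilibria expressed through the inverses $f^{-1}$ and $h^{-1}$. Throughout, local existence of $(u,v)$ is standard for the semilinear system \eqref{1.4}, the strong maximum principle gives $u(\cdot,t),v(\cdot,t)>0$ for $t>0$ so that $g(u)>0$ may be used freely, and global existence follows a posteriori from the uniform upper bounds produced below.

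First I would bound $u$ from above. Since $g(u)\ge 0$ and $v\ge 0$, the prey equation gives $u_t\le d_1\Delta u+g(u)f(u)$, and by $(\mathbf A_1)$--$(\mathbf A_2)$ the scalar nonlinearity $H(w)=g(w)f(w)$ satisfies $H(w)(w-a)<0$ for $w>0$, $w\ne a$; Lemma \ref{L2.0} together with the parabolic comparison principle then yields $\limsup_{t\to\infty}\max_{\cOm}u\le a$, so $u\le a+\ep$ eventually. Feeding this into the predator equation, monotonicity of $g$ gives $v_t\le d_2\Delta v+v\big(h(v)+cg(a+\ep)\big)$; the scalar root $\overline v_\ep:=h^{-1}(-cg(a+\ep))>d$ exists because $-cg(a+\ep)$ stays inside the range of $h|_{[d,\infty)}$, the associated nonlinearity again changes sign exactly once at $\overline v_\ep$, and Lemma \ref{L2.0} gives $\limsup v\le\overline v_\ep$. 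Next, using $v\le\overline v_\ep+\ep$ in the prey equation produces $u_t\ge d_1\Delta u+g(u)\big(f(u)-(\overline v_\ep+\ep)\big)$, hence a positive lower bound $\underline u_\ep$, and using this in the predator equation produces the positive lower bound $h^{-1}(-cg(\underline u_\ep))$ for $v$. Letting $\ep\to0$, the four equilibria are $\overline u=a$, $\overline v=h^{-1}(-cg(a))$, $\underline u$ the root of $f(\underline u)=\overline v$, and $\underline v=h^{-1}(-cg(\underline u))>d$; a direct substitution shows these satisfy all four inequalities in \eqref{re} (three with equality), so no full monotone iteration to a fixed point is needed. The squeezing then shows the solution is eventually trapped in the corresponding rectangle; after enlarging each bound by a fixed $\ep$-cushion (which preserves \eqref{re} and upgrades the $\limsup$/$\liminf$ estimates to genuine pointwise bounds), \eqref{re2} holds for $t>t_0(\phi)$.

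The hypothesis $h(f(0))<-cg(a)$ is exactly what makes the third step succeed, and this is where the main difficulty lies. For the lower bound on $u$ we need $g(w)\big(f(w)-(\overline v_\ep+\ep)\big)$ to have a unique sign-changing zero in $(0,\infty)$, which requires $\overline v<f(0)$; since $\overline v=h^{-1}(-cg(a))$ and $h$ is decreasing on $[d,\infty)$, the inequality $\overline v<f(0)$ is equivalent to $-cg(a)>h(f(0))$, i.e.\ to the assumption (note the assumption also forces $f(0)>d$, so $h^{-1}$ is applicable). Under $(\mathbf A_1)$ this gives a single zero both when $f$ is monotone and when $f$ is hump-shaped: in the latter case $f>\overline v$ on all of $[0,\la]$ precisely because $f(0)>\overline v$, so the level $\overline v$ is crossed only on the decreasing branch in $(\la,a)$, producing the unique $\underline u\in(0,a)$ demanded by Lemma \ref{L2.0}. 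Thus the hypothesis ensures the prey is not asymptotically driven to extinction and that every lower bound generated by the iteration stays strictly positive; the remaining verifications (the range conditions for $h^{-1}$, the sign conditions for the two predator steps, and the bookkeeping of the $\ep$-cushions) are routine.
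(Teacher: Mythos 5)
Your proposal follows essentially the same route as the paper's proof: a sequential comparison argument that feeds each one-sided bound into the next equation and invokes Lemma \ref{L2.0} at every step, with the hypothesis $h(f(0))<-cg(a)$ used exactly as in the paper to guarantee that the level $h^{-1}\left(-cg(a+\ep)\right)+\ep$ lies below $f(0)$, so that the prey's asymptotic lower bound is strictly positive. The only differences are organizational: the paper obtains $\underline v=d-\ep$ directly from $v_t\ge d_2\Delta v+vh(v)$ (before bounding $v$ above) rather than your sharper $h^{-1}\left(-cg(\underline u)\right)$, and it keeps the $\ep$-dependent bounds throughout instead of passing to the limit $\ep\to0$ and re-cushioning --- which sidesteps the care needed in your final step, where the cushions must be chosen in nested order (the bound for $\overline v$ computed from the already-cushioned $\overline u$, and likewise for $\underline v$ from $\underline u$) so as not to violate the two inequalities of \eqref{re} that your limiting values satisfy only with equality.
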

\begin{proof}
Since $h(f(0))<-cg(a)$, if follows from assumption $(\mathbf{A_3})$ that there exists $\epsilon>0$ such that $h^{-1}\left(-cg(a+\epsilon)\right)>d$ exists and
\begin{equation}\label{fg1}
\epsilon<d,\;\;f(0)-\left[h^{-1}\left(-cg(a+\epsilon)\right)+\epsilon\right]>0,
\end{equation}
where $h^{-1}$ is the inverse function of $h$.
It follows from $$ \ds\frac{\partial u}{\partial t}\le d_1\Delta u+g(u)f(u)$$
and Lemma \ref{L2.0} that,
for any initial value $\phi$, there exists $t_1(\phi)>0$ such that $u(x,t)\le  a+\epsilon$ for $t>t_1(\phi)$, where $a>0$ is the unique zero of $f(u)$.
Since $$\ds\frac{\partial v}{\partial t}\ge d_2\Delta v+vh(v),$$ there exists
$t_2(\phi)>t_1(\phi)$ such that $v(x,t)\ge  d-\epsilon>0$ for $t>t_2(\phi)$.
Consequently, we have
$$\ds\frac{\partial v}{\partial t}\le d_2\Delta v+v\left[h(v)+cg(a+\epsilon)\right]\;\;\text{for}\;\;t>t_2(\phi),$$
and then there exists $t_3(\phi)>t_2(\phi)$ such that $v(x,t)\le h^{-1}\left(-cg(a+\epsilon)\right)+\epsilon$ for $t>t_3(\phi)$.
Since $f,g$ satisfy $(\mathbf{A_1})$, $(\mathbf{A_2})$ and Eq. \eqref{fg1}, there exists $\overline a \in(0,a)$, depending on $c$, such that
$$f(u)-\left[h^{-1}\left(-cg(a+\epsilon)\right)+\epsilon\right]>0 \text{ for }u\in[0,\overline a],$$
and $$f(u)-\left[h^{-1}\left(-cg(a+\epsilon)\right)+\epsilon\right] <0\text{ for }u\in(\overline a, a].$$
It follows from Lemma \ref{L2.0} that there exists $t_4(\phi)>t_3(\phi)$ such that
$u(x,t)\ge \ds\f{\overline a}{2}>0$ for $t>t_4(\phi)$.
Choose
\begin{equation}\label{re3}
 \begin{split}
       &\overline u=a+\epsilon ,\;\;\underline u=\ds\f{\overline a}{2}, \\
      &\underline v= d-\epsilon,\;\;\overline v=h^{-1}\left(-cg(a+\epsilon)\right)+\epsilon. \\
    \end{split}
\end{equation}
Then $(\underline u,\underline v)$ and $(\overline
u,\overline v)$ satisfy Eq. \eqref{re}, and
there exists $t_0(\phi)>t_4(\phi)$ such that
the corresponding solution $(u(x,t),v(x,t))$ of system \eqref{1.4}
satisfies Eq. \eqref{re2}
for any $t>t_0(\phi)$.
\end{proof}
Then, through the upper and lower solution method
\cite{27,pao,4,5}, we have the following results on the global attractivity of the positive equilirium.
\begin{theorem}\label{T2.2}
Assume that $f$, $g$ and $h$ satisfy assumptions $(\mathbf{A_1})$, $(\mathbf{A_2})$ and $(\mathbf{A_3})$. If $f(0)>d$, then there exists $c_0>0$, depending on $f$, $g$ and $h$, such that system \eqref{1.4} has a unique constant positive
steady state solution $(u_*,v_*)$ for any $c\in (0,c_0)$. Furthermore,
for any initial value $\phi=(u_0(x),v_0(x))$, where $u_0(x)\ge(\not\equiv)0$,
$v_0(x)\ge(\not\equiv)0$ for all $x\in \cOm$, the corresponding solution $(u(x,t),v(x,t))$ of system
\eqref{1.4} converges uniformly to $(u_*,v_*)$ as
$t\rightarrow\infty$.
\end{theorem}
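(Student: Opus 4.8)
The plan is to treat the two assertions in turn, reducing the constant equilibrium to a scalar equation and then obtaining global attractivity from the monotone iteration generated by the coupled upper and lower solutions that Lemma~\ref{L2.1} already provides. First note that $(u_*,v_*)$ is a constant positive steady state of \eqref{1.4} if and only if $v_*=f(u_*)$ with $u_*\in(0,a)$ and $H(u):=h(f(u))+cg(u)=0$. Since $f(0)>d$ gives $h(f(0))<0$ and $g(0)=0$, we have $H(0)<0$, while $f(a)=0<d$ gives $h(0)>0$, so $H(a)=h(0)+cg(a)>0$; hence $H$ has a zero in $(0,a)$ for every $c>0$. Writing $\delta:=-h(f(0))>0$, the hypothesis $h(f(0))<-cg(a)$ of Lemma~\ref{L2.1} holds exactly when $c<\delta/g(a)$, and I will take $c_0\le\delta/g(a)$.

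For uniqueness I would show that, when $c$ is small, every zero of $H$ lies on the part of $(0,a)$ where $f$ is strictly decreasing. At any zero, $h(f(u))=-cg(u)<0$ forces $f(u)>d$; but on the increasing part $(0,\la)$ of the second alternative in $(\mathbf{A_1})$ one has $f\ge f(0)>d$, so $h(f(u))\le h(f(0))=-\delta$ is bounded away from $0$, whereas $-cg(u)\ge-cg(a)$ tends to $0$, so for $c<\delta/g(a)$ no zero can sit there. On the decreasing part $f'(u)<0$, and $f(u)>d$ gives $h'(f(u))<0$ by $(\mathbf{A_3})$, so at every zero $H'(u)=h'(f(u))f'(u)+cg'(u)>0$. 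A function whose every zero is a transversal up-crossing has at most one zero (two of them would force an intermediate down-crossing), so the zero $u_*$ is unique, with $v_*=f(u_*)>d$, $H<0$ on $[0,u_*)$ and $H>0$ on $(u_*,a]$.

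For attractivity, set $F_1(u,v)=g(u)(f(u)-v)$ and $F_2(u,v)=v(h(v)+cg(u))$; then $\partial_vF_1=-g(u)\le0$ and $\partial_uF_2=cvg'(u)\ge0$, so \eqref{1.4} is mixed quasimonotone. The inequalities \eqref{re} say precisely that the constants $(\overline u,\overline v)$ and $(\underline u,\underline v)$ of Lemma~\ref{L2.1} are an ordered pair of coupled upper and lower solutions, and \eqref{re2} shows the solution enters the box $[\underline u,\overline u]\times[\underline v,\overline v]$ in finite time. The monotone iteration for mixed quasimonotone systems \cite{pao} then yields constant maximal and minimal quasi-solutions $(\overline u^*,\overline v^*)\ge(\underline u^*,\underline v^*)$ with
\[ f(\overline u^*)=\underline v^*,\quad f(\underline u^*)=\overline v^*,\quad h(\overline v^*)+cg(\overline u^*)=0,\quad h(\underline v^*)+cg(\underline u^*)=0, \]
and $(\underline u^*,\underline v^*)\le\liminf_{t\to\infty}(u,v)\le\limsup_{t\to\infty}(u,v)\le(\overline u^*,\overline v^*)$. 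It thus suffices to prove the box collapses, $(\overline u^*,\overline v^*)=(\underline u^*,\underline v^*)$, since both must then equal $(u_*,v_*)$ and uniform convergence follows.

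This collapse is the step I expect to be the main obstacle. Eliminating $\overline v^*,\underline v^*$ gives $h(f(\overline u^*))=-cg(\underline u^*)$ and $h(f(\underline u^*))=-cg(\overline u^*)$, whence $H(\overline u^*)=c\big(g(\overline u^*)-g(\underline u^*)\big)\ge0$ and $H(\underline u^*)=-c\big(g(\overline u^*)-g(\underline u^*)\big)\le0=-H(\overline u^*)$. Because $H$ has the single zero $u_*$ with the sign pattern above, this forces $\underline u^*\le u_*\le\overline u^*$. As in the uniqueness step, for small $c$ both lie on the decreasing part of $f$; moreover $f(\overline u^*)>d$ together with monotonicity gives $f>d$ throughout $[\underline u^*,\overline u^*]$, so $h'(f(\cdot))<0$ there, and $h(f(\underline u^*))=-cg(\overline u^*)\to0$ forces $f(\underline u^*)\to d$, i.e. $\underline u^*\to u_0^*$, the root of $f=d$; likewise $\overline u^*\to u_0^*$. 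If $\underline u^*<\overline u^*$, applying the mean value theorem to $H(\overline u^*)-H(\underline u^*)=2c\big(g(\overline u^*)-g(\underline u^*)\big)$ yields $H'(\zeta)=2cg'(\eta)$ for some $\zeta,\eta\in(\underline u^*,\overline u^*)$. But $H'(\zeta)\ge h'(f(\zeta))f'(\zeta)\to h'(d)f'(u_0^*)>0$ while $2cg'(\eta)\to0$ as $c\to0$, so the equality is impossible once $c$ is below a threshold determined by $f,g,h$; hence $\underline u^*=\overline u^*$ and then $\underline v^*=\overline v^*$. Taking $c_0$ to be the minimum of this threshold and $\delta/g(a)$ completes the argument.
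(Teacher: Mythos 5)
Your proposal is correct, and its skeleton is the paper's own: the constants from Lemma \ref{L2.1} serve as coupled upper--lower solutions of the mixed quasimonotone system, Pao's monotone iteration yields constant quasi-solutions satisfying exactly the four equations \eqref{2.5} (your $(\overline u^*,\overline v^*,\underline u^*,\underline v^*)$ are the paper's $(\tilde u,\tilde v,\check u,\check v)$), and everything reduces to forcing the quasi-solutions to coincide for small $c$. The two arguments diverge precisely at that collapse step, and your version is in fact the more robust one. The paper localizes $\check u\ge\overline\la$ (Case I) or $\check u\ge\tilde\la$ (Case II) and applies the mean value theorem on the fixed interval $[\overline\la,a]$, asserting $\min_{[\overline\la,a]}[h(f(u))]'>0$; as written, that assertion needs $h'<0$ also at values $f(u)<d$ (which occur for $u$ near $a$), and $(\mathbf{A_3})$ does not provide this when $h$ is non-monotone below $d$, e.g.\ the weak Allee $h$ of Section 4.2 --- the repair is to observe that $f(\tilde u)=\check v>d$ and $f(\check u)=\tilde v>d$ (since $h(\check v),h(\tilde v)<0$), so the mean-value interval $[\check u,\tilde u]$ actually lies in $\{f>d\}$. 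Your argument builds this in automatically: you keep the mean value theorem on $[\underline u^*,\overline u^*]$, where $f>d$ holds by construction, and instead of a fixed-interval bound you pin both endpoints at $u_0^*=f^{-1}(d)$ as $c\to 0$, obtaining the contradiction $H'(\zeta)\ge h'(f(\zeta))f'(\zeta)\to h'(d)f'(u_0^*)>0$ against $2cg'(\eta)\to 0$; note only that the localization should be stated uniformly (from $|h(f(\underline u^*))|\le cg(a)$ and strict monotonicity of $h$ on $[d,\infty)$, the threshold depends only on $f$, $g$, $h$), which your phrasing essentially contains. This also removes the paper's Case I/Case II split. Finally, your standalone analysis of $H(u)=h(f(u))+cg(u)$ --- existence by the intermediate value theorem and uniqueness because every zero is a transversal up-crossing once $c<\delta/g(a)$ --- has no counterpart in the paper, which obtains existence and uniqueness of $(u_*,v_*)$ only as a byproduct of the collapse; it is a clean addition that lets you identify the common limit of the quasi-solutions directly.
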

\begin{proof}
Since $f(0)>d$ (or equivalently, $h(f(0))<0$), there exists $c_1>0$ such that $h(f(0))<-cg(a)$ for $c\in(0, c_1]$. In the following, we always assume that
$c<c_1$.
From Lemma \ref{L2.1}, we see that there exist $(\underline u,\underline v),\;(\overline
u,\overline v)>(0,0)$, which is a pair of coupled upper and lower solution of system
\eqref{1.4}. Since $f$ and $g$ are smooth, there exists $K>0$ such that, for any $(u_1,v_1)$ and $(u_2,v_2)$
satisfying $(\underline u,\underline v)\le(u_1,v_1),(u_2,v_2)\le(\overline
u,\overline v)$, we have
\begin{equation*}
\begin{array}{l}
\left|g(u_1)\left(f(u_1)-v_1\right)-g(u_2)\left(f(u_2)-v_2\right)\right|\le K(|u_1-u_2|+|v_1-v_2|),\\
\left|v_1h(v_1)+cg(u_1)v_1-v_2h(v_2)-cg(u_2)v_2\right|\le
K(|u_1-u_2|+|v_1-v_2|).
\end{array}
\end{equation*}
Define two iteration sequences $(\overline u^{(m)},\overline
v^{(m)})$ and $(\underline u^{(m)},\underline v^{(m)})$ as follows: for
$m\ge 0$,
\begin{equation*}
\begin{array}{l}
\overline u^{(m+1)}=\overline
u^{(m)}+\ds\frac{g\left(\overline u^{(m)}\right)}{K}\left[f(\overline u^{(m)})-\underline v^{(m)}\right],\\
\underline u^{(m+1)}=\underline
u^{(m)}+\ds\frac{g\left(\underline u^{(m)}\right)}{K}\left[f(\underline u^{(m)})-\overline v^{(m)}\right],\\
\overline v^{(m+1)}=\overline
v^{(m)}+\ds\frac{\overline v^{(m)}}{K}\left[h\left(\overline v^{(m)}\right)+cg\left(\overline u^{(m)}\right)\right],\\
\underline v^{(m+1)}=\underline
v^{(m)}+\ds\frac{\underline v^{(m)}}{K}\left[h\left(\underline  v^{(m)}\right)+cg\left(\underline  u^{(m)}\right)\right],\\
\end{array}
\end{equation*}
where $(\overline u^{(0)}, \overline v^{(0)})=(\overline
u,\overline v)$ and $(\underline u^{(0)}, \underline v^{(0)})=(\underline
u,\underline v)$. Then there exist $(\tilde u,\tilde v)$ and
$(\check u, \check v)$ such that $(\underline u,\underline v)\le(\check u, \check v)\le (\tilde u,\tilde v)\le(\overline
u,\overline v)$, $\lim_{m
\rightarrow \infty}\overline u^{(m)}=\tilde u$, $\lim_{m \rightarrow
\infty}\overline v^{(m)}=\tilde v$, $\lim_{m \rightarrow
\infty}\underline u^{(m)}=\check u$, $\lim_{m \rightarrow
\infty}\underline v^{(m)}=\check v$, and
\begin{equation}\label{2.5}
    \begin{split}
       &0=f(\tilde u)-\check v,~0=h(\tilde v)+cg(\tilde u), \\
       &0= f(\check u)-\tilde v,~0=h(\check v)+cg(\check u).
    \end{split}
\end{equation}
It follows from Eq. \eqref{2.5} that
\begin{equation}\label{fgqu}
h\left(f(\tilde u)\right)-c g(\tilde u)=h\left(f(\check u)\right)-c g(\check u).
\end{equation}
The following proof is given in two cases.\\
Case I. If $f'(u)<0$ on $(0,a]$, where $a$ is the unique zero of $f(u)$, then it follows from Eq. \eqref{2.5} and assumptions $(\mathbf{A_1})$-$(\mathbf{A_3})$ that
\begin{equation}\label{inc}
f(\check u)=\tilde v=h^{-1}\left(-cg(\tilde u)\right)<h^{-1}\left(-c_1g(a)\right)<f(0)\;\;\text{for any}\;\;c\in(0,c_1).\end{equation}
Then there exists $\overline \lambda\in(0,a)$ such that $f(\overline \lambda)=h^{-1}\left(-c_1g(a)\right)$ and $\check u\ge \overline \la $ for any $c\in (0,c_1)$. Hence
$$0=h(f(\tilde u))-c g(\tilde u)-h(f(\check u))+c g(\check u)\ge \left[\min_{u\in[\overline \la,a]}\left[h\left(f(u)\right)\right]'-c\max_{u\in[\overline \la,a]}g'(u)\right](\tilde u-\check u) $$
for any $c\in(0,c_1)$.
Since $\min_{u\in[\overline \la,a]}\left[h\left(f(u)\right)\right]'>0$, there exists $c_0\in(0,c_1)$, depending on $f$, $g$ and $h$, such that $\tilde u=\check u$ for any $c\in(0,c_0)$.\\
Case II. If there exists $\la\in(0,a)$ such that
    $f'(u)>0$ on $(0,\la)$ and $f'(u)<0$ on $(\la,a]$, then there exist $\tilde \lambda\in(\la,a)$ such that $f(\tilde \la)=f(0)$. It follows from Eq. \eqref{inc} that $\check u \ge \tilde \la$ for any $c\in(0,c_1)$. Hence
$$0=h(f(\tilde u))-c g(\tilde u)-h(f(\check u))+c g(\check u)\ge \left[\min_{u\in[\tilde \la,a]}\left[h\left(f(u)\right)\right]'-c\max_{u\in[\tilde \la,a]}g'(u)\right](\tilde u-\check u) $$
for any $c\in(0,c_1)$.
Since $\min_{u\in[\tilde \la,a]}\left[h\left(f(u)\right)\right]'>0$, there exists $c_0\in(0,c_1)$, depending on $f$, $g$ and $h$, such that $\tilde u=\check u$ for $0<c<c_0$.

It follows from the upper and lower solution method
\cite{27,pao,4,5} that, for any $c\in(0,c_0)$, system \eqref{1.4} has a unique constant positive equilibrium $(u_*,v_*)$, which is globally attractive.
\end{proof}
From Lemma \ref{L2.0} and Theorem \ref{T2.2}, we have the following results on the global attractivity of constant equilibria.
\begin{corollary}\label{cc25}
Assume that $f$, $g$ and $h$ satisfy assumptions $(\mathbf{A_1})$, $(\mathbf{A_2})$ and $(\mathbf{A_3})$. Then the following two statements are true.
\begin{enumerate}
\item [$(1)$] If $f(0)>d$ (or equivalently, $h(f(0))<0$), then there exists $c_0>0$, depending on $f$, $g$ and $h$, such that, for any $c\in(0,c_0)$, system \eqref{1.4} has a unique constant positive steady state, which is globally attractive. Hence system \eqref{1.4} has no non-constant positive steady states for $c\in(0,c_0)$ if $f(0)>d$.
\item [$(2)$] If $\max_{u\in[0,a]}f(u)<d$ (or equivalently $h(\max_{u\in[0,a]}f(u))>0$), then the steady state $(0,d)$ of system \eqref{1.4} is globally attractive. Hence system \eqref{1.4} has no positive steady states if $\max_{u\in[0,a]}f(u)<d$.
\end{enumerate}
\end{corollary}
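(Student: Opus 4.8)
The first statement is essentially a reformulation of Theorem \ref{T2.2}. Under the hypothesis $f(0)>d$ that theorem already supplies a $c_0>0$ so that, for each $c\in(0,c_0)$, system \eqref{1.4} has a unique constant positive steady state $(u_*,v_*)$ which attracts every solution issuing from nonnegative, not identically zero data. To pass from this to the non-existence of \emph{non-constant} positive steady states, I would observe that any positive steady state $(\hat u,\hat v)$ is itself a (time-independent) solution of \eqref{1.4}; taking it as initial value, the corresponding solution is constant in $t$ yet must converge to $(u_*,v_*)$, forcing $(\hat u,\hat v)\equiv(u_*,v_*)$. Hence the only positive steady state is the constant one, which proves statement $(1)$.

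For statement $(2)$ the real work is to show $(u,v)\to(0,d)$ uniformly; the non-existence of positive steady states then follows by the same equilibrium argument as above. The plan is to decouple the two equations asymptotically through repeated use of Lemma \ref{L2.0}. From $u_t\le d_1\Delta u+g(u)f(u)$ and the fact that $H_1(u):=g(u)f(u)$ satisfies $H_1(u)(u-a)<0$ for $u>0,\ u\ne a$ (both factors change sign at $a$, with $g>0$ for $u>0$), Lemma \ref{L2.0} and comparison give, for any small $\epsilon>0$, a time beyond which $u\le a+\epsilon$. Similarly, from $v_t\ge d_2\Delta v+vh(v)$ together with $H_2(v):=vh(v)$ satisfying $H_2(v)(v-d)<0$, comparison from below yields $\liminf_{t\to\infty}v\ge d$, so eventually $v\ge d-\epsilon$.

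The crux is to exploit $\max_{u\in[0,a]}f(u)<d$. Since $f<0$ on $(a,a+\epsilon]$ for $\epsilon$ small, one has $\max_{[0,a+\epsilon]}f=\max_{[0,a]}f$, so I can fix $\epsilon$ small enough that $\delta:=d-\epsilon-\max_{[0,a]}f>0$. Once $u\le a+\epsilon$ and $v\ge d-\epsilon$ hold simultaneously, we get $f(u)-v\le-\delta<0$, hence $u_t\le d_1\Delta u-\delta g(u)$. Because $-\delta g(u)$ vanishes only at $u=0$ and is negative for $u>0$, Lemma \ref{L2.0} with $w_0=0$ forces $u(x,t)\to0$ uniformly. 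With $u\to0$ we then have $cg(u)\to0$ uniformly, so for any $\eta>0$ eventually $v_t\le d_2\Delta v+v(h(v)+\eta)$; since $h'(d)<0$, the map $v(h(v)+\eta)$ has a unique positive zero $d_\eta>d$ with $d_\eta\to d$ as $\eta\to0$, and Lemma \ref{L2.0} gives $\limsup_{t\to\infty}v\le d_\eta$. Letting $\eta\to0$ and combining with $\liminf v\ge d$ yields $v\to d$, completing the proof that $(0,d)$ is globally attractive.

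I expect the main obstacle to lie in the bookkeeping of these limiting arguments: each auxiliary comparison problem converges only to an $\epsilon$- (respectively $\eta$-) dependent constant, so the parameters must be introduced in the correct order and sent to zero one at a time, while verifying at each stage that the sign hypothesis $H(w)(w-w_0)<0$ of Lemma \ref{L2.0} genuinely holds. The decisive point is the translation of $\max_{[0,a]}f<d$ into the strict inequality $f(u)-v\le-\delta<0$ on the asymptotic region $\{u\le a+\epsilon,\ v\ge d-\epsilon\}$, since it is precisely this sign that drives the prey to extinction and makes the decoupling work.
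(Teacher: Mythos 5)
Your proposal is correct and follows essentially the same route as the paper: part $(1)$ is exactly the paper's appeal to Theorem \ref{T2.2} (the paper leaves the ``steady state as initial data'' argument implicit), and part $(2)$ is the paper's argument of using $v_t\ge d_2\Delta v+vh(v)$ with Lemma \ref{L2.0} to get $v\ge d-\epsilon$ eventually, whence $f(u)-v\le \max_{[0,a]}f-d+\epsilon<0$ drives $u\to 0$. In fact you are slightly more thorough than the paper in two harmless respects: the paper skips your preliminary bound $u\le a+\epsilon$ (unneeded, since $f(u)\le\max_{[0,a]}f$ already holds for all $u\ge 0$ because $f<0$ beyond $a$), and it also omits your final $\limsup_{t\to\infty}v\le d_\eta$ step confirming $v\to d$.
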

\begin{proof}
If $f(0)>d$, then it follows from Theorem \ref{T2.2} that there exists $c_0>0$ such that, for $c\in(0,c_0)$, system \eqref{1.4} has a unique constant positive steady state, which is globally attractive. Hence system \eqref{1.4} has no non-constant positive steady states for $c\in(0,c_0)$.

Then, we consider the case that $\max_{u\in[0,a]}f(u)<d$. Since $$\ds\frac{\partial v}{\partial t}\ge d_2\Delta v+vh(v),$$ it follows from Lemma \ref{L2.0} that
for any initial value and $$\epsilon\in\left(0,\ds\f{1}{2}\left(d-\max_{u\in[0,a]}f(u)\right)\right),$$ there exists
$t_2>0$ such that $v(x,t)\ge  d-\epsilon>0$ for $t>t_2$, and consequently,
$$ \ds\frac{\partial u}{\partial t}\le d_1\Delta u+g(u)\left(f(u)-v\right)\le d_1\Delta u+g(u)\left(\max_{u\in[0,a]}f(u)-d+\epsilon\right)$$for $t>t_2$, which implies that $u(x,t)$ converges uniformly to $0$ as $t\to\infty$. Therefore, the steady state $(0,d)$ of system \eqref{1.4} is globally attractive.

\end{proof}
We remark that the above results do not mention the case that $$ h\left(\max_{u\in[0,a]}f(u)\right)<0<h(f(0)),$$ that is,
$$\max_{u\in[0,a]}f(u)>d>f(0).$$ For this case, the dynamics is complex even when $c=0$. The steady states of Eq. \eqref{1.4} for $c=0$ satisfy
\begin{equation}\label{steady1.4}
\begin{cases}
  -d_1\Delta u=g(u)\left(f(u)-v\right), & x\in \Omega,\\
 -d_2\Delta v=vh(v), & x\in\Omega, \\
   \partial_\nu u=\partial_\nu v=0,& x\in \partial \Omega,\\
\end{cases}
\end{equation}
and $(u(x),d)$ is a positive solution of Eq. \eqref{steady1.4} if and only if $u(x)$ is a positive solution of the following equation
\begin{equation}\label{allee}
\begin{cases}
-d_1\Delta u=g(u)\left(f(u)-d\right),& x\in\Omega, \\
   \partial_\nu u=0,& x\in \partial \Omega.\\
   \end{cases}
\end{equation}
Eq. \eqref{allee} may exhibit a strong Allee effect, which has a stable constant positive equilibrium and an unstable constant positive equilibrium, see \cite{dsams,Wangws2,wangsjmb} for related work on strong Allee effect.

\section{The case of large conversion rate}
In this section, we prove the non-existence of non-constant positive steady states of system \eqref{1.4} when conversion rate $c$ is large, and
the method used here is motivated by \cite{Peng-Shi}. Throughout this section, we assume that $f$ and $g$ satisfy $(\mathbf{A'_1})$ and $(\mathbf {A'_2})$.
Define \begin{equation}\label{pu}
q(u)=\begin{cases}
   \ds\f{g(u)}{u}, & \text{ if } u>0,\\
  g'(0), & \text{ if }u=0,
\end{cases}
\end{equation}
and then $q(u)\in C^1(\overline{\mathbb {R}^+})$.
Let $w=cu$, $\rho=\ds\f{1}{c}$, and then $w$ and $v$ satisfy
\begin{equation}\label{steadywz1.4}
\begin{cases}
  -d_1\Delta w=wq(\rho w)\left[f(\rho w)-v\right], & x\in \Omega,\\
 -d_2\Delta v=v\left[h(v) +q(\rho w)w\right], & x\in\Omega, \\
   \partial_\nu u=\partial_\nu v=0,& x\in \partial \Omega.\\
\end{cases}
\end{equation}
Hence the existence/non-existence of positive steady states of system \eqref{1.4} for large $c$ is equivalent to that of solutions of system \eqref{steadywz1.4} for small $\rho$.
In the following, we first cite the maximum principle for weak solutions from \cite{Lieberman,Lou-Ni,Peng-Shi-Wang2} and the Harnack inequality for weak solutions from \cite{Lin-Ni,Peng-Shi,Peng-Shi-Wang2} for later application.
\begin{lemma}\label{maxin}
Assume that $\Omega$ is a bounded Lipschitz domain in $\mathbb{R}^N$, and $g\in C(\overline\Omega\times \mathbb R)$. If $z\in H^{1}(\Omega)$
is a weak solution of the inequalities
\begin{equation*}
\begin{cases}
  \Delta z+g(x,z)\ge0, & x\in \Omega,\\
 \partial_\nu  z\le0,& x\in \partial \Omega,\\
\end{cases}
\end{equation*}
and there exists a constant $K$ such that $g(x, z) < 0 $ for $z > K$, then
$$z \le K \text{ a.e. in } \Omega.$$
\end{lemma}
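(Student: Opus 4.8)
The plan is to run the classical Stampacchia truncation argument: test the weak inequality against the positive part of the excess $(z-K)^+$. First I would fix the meaning of ``weak solution of the inequalities''. Integrating $\Delta z+g(x,z)\ge 0$ against a nonnegative test function $\varphi\in H^1(\Omega)$ and using $\int_\Omega(\Delta z)\varphi = -\int_\Omega\nabla z\cdot\nabla\varphi+\int_{\partial\Omega}(\partial_\nu z)\varphi$, the boundary term is $\le 0$ because $\partial_\nu z\le 0$ and $\varphi\ge 0$; so the combined weak formulation reads
\begin{equation*}
\int_\Omega \nabla z\cdot\nabla\varphi\,dx \le \int_\Omega g(x,z)\,\varphi\,dx
\quad\text{for every }\varphi\in H^1(\Omega)\text{ with }\varphi\ge 0\text{ a.e.}
\end{equation*}

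Next I would take the specific test function $\varphi=(z-K)^+:=\max\{z-K,0\}$. Since $z\in H^1(\Omega)$, the lattice property of $H^1$ (composition with the Lipschitz map $s\mapsto s^+$) guarantees $\varphi\in H^1(\Omega)$, $\varphi\ge 0$, with weak gradient $\nabla\varphi=\nabla z\,\mathbf{1}_{\{z>K\}}$ a.e. Substituting, the left-hand side becomes $\int_{\{z>K\}}|\nabla z|^2\,dx=\int_\Omega|\nabla\varphi|^2\,dx\ge 0$. On the right-hand side the integrand is supported on $\{z>K\}$, where by hypothesis $g(x,z)<0$ while $(z-K)^+>0$; hence $g(x,z)\varphi\le 0$ pointwise and $\int_\Omega g(x,z)\varphi\,dx\le 0$.

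Combining the two bounds squeezes everything to zero:
\begin{equation*}
0\le \int_\Omega |\nabla\varphi|^2\,dx \le \int_{\{z>K\}} g(x,z)\,(z-K)\,dx \le 0,
\end{equation*}
so both integrals vanish. Because the right-hand integrand is strictly negative at every point where $z>K$, its integral can be zero only if $|\{z>K\}|=0$, i.e.\ $z\le K$ a.e.\ in $\Omega$, as claimed; note the argument uses neither connectedness of $\Omega$ nor any sign structure of $g$ below the level $K$. I expect the only genuinely delicate points to be bookkeeping rather than conceptual: confirming that $(z-K)^+$ is an admissible $H^1$ test function with the stated gradient, and handling the integrability of $g(x,z)(z-K)^+$ on $\{z>K\}$ — this last is painless, since that integrand is nonpositive, so its integral is well defined in $[-\infty,0]$ and the squeeze above forces it to be finite and equal to zero.
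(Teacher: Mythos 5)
Your proof is correct. Note, however, that the paper does not prove this lemma at all: it is stated as a known result, quoted directly from \cite{Lieberman,Lou-Ni,Peng-Shi-Wang2}, so there is no in-paper argument to compare against. Your Stampacchia truncation argument---testing the combined variational inequality with $(z-K)^+$, using $\nabla (z-K)^+=\nabla z\,\mathbf{1}_{\{z>K\}}$, and squeezing both integrals to zero---is precisely the standard proof underlying those references, and your handling of the two delicate points (admissibility of the test function, and integrability of $g(x,z)(z-K)^+$ via its sign, which keeps the integral well defined in $[-\infty,0]$) is sound; if one wanted to be fully pedantic about test functions being bounded, one could test with $\min\{(z-K)^+,M\}$ and let $M\to\infty$ by monotone convergence, but this changes nothing essential.
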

\begin{lemma}\label{harnack}
Assume that $\Omega$ is a bounded Lipschitz domain in $\mathbb{R}^N$, and $c(x)\in L^q(\Omega)$ for some $q > N/2$. If
$z \in H^{1}(\Omega)$ is a non-negative weak solution of the following problem
\begin{equation*}
\begin{cases}
  \Delta u +c(x) u=0, & x\in \Omega,\\
 \partial_\nu  u=0,& x\in \partial \Omega,\\
\end{cases}
\end{equation*}
then there is a positive constant $C$, which is determined only by $\|c(x)\|_q$, $q$ and $\Omega$, such that
$$\sup_{x\in\Omega} u\le C\inf_{x\in\Omega} u.$$
\end{lemma}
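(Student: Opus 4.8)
The plan is to obtain the global Harnack inequality by combining local interior and boundary estimates produced by Moser iteration with a covering-and-chaining argument over $\overline\Omega$. The hypothesis $c(x)\in L^q(\Omega)$ with $q>N/2$ is exactly the integrability threshold under which the lower-order term $c(x)u$ can be absorbed into the De Giorgi--Nash--Moser machinery for divergence-form equations, so the argument reduces to the classical elliptic theory in which the low-order coefficient is only assumed to lie in $L^q$.

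First I would establish two one-sided local estimates. Writing the equation as $\Delta u=-c(x)u$ and testing the weak formulation against $\eta^2 u^\beta$, where $\eta$ is a Lipschitz cut-off, Moser iteration applied to the resulting Caccioppoli-type inequalities yields, on balls with $B_{2r}(x_0)\subset\Omega$: a local maximum principle $\sup_{B_r(x_0)}u\le C\left(\frac{1}{|B_{2r}|}\int_{B_{2r}(x_0)}u^{p}\,dx\right)^{1/p}$ valid for each $p>0$, and a weak Harnack inequality $\left(\frac{1}{|B_{2r}|}\int_{B_{2r}(x_0)}u^{p_0}\,dx\right)^{1/p_0}\le C\inf_{B_r(x_0)}u$ for some small exponent $p_0>0$. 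The constants depend only on $N$, $q$, $\|c\|_q$ and $r$. Since $u\ge0$, choosing $p=p_0$ and composing the two estimates gives the interior local Harnack inequality $\sup_{B_r(x_0)}u\le C\inf_{B_r(x_0)}u$.

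To carry the estimate up to $\partial\Omega$ I would exploit the Neumann condition: since $\partial\Omega$ is Lipschitz, near a boundary point one can flatten the boundary by a bi-Lipschitz change of variables and use $\partial_\nu u=0$ to extend $u$ by even reflection to a weak solution of an equation of the same type on a full ball, with a reflected coefficient that still belongs to $L^q$. The interior estimates then apply across the boundary and furnish a uniform local Harnack constant on a neighbourhood of every point of $\overline\Omega$.

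The global statement follows from compactness and connectedness. I would cover the compact set $\overline\Omega$ by finitely many balls on each of which the local Harnack inequality holds; since $\overline\Omega$ is connected, any two points can be joined by a chain of overlapping such balls whose length is bounded in terms of $\Omega$ only, and chaining the local inequalities along such a path yields $\sup_\Omega u\le C\inf_\Omega u$ with a single constant $C$ determined by $N$, $q$, $\|c\|_q$ and $\Omega$. The main obstacle is the boundary step: one must check that the reflection keeps the coefficient in $L^q$ and that the mere Lipschitz regularity of $\partial\Omega$ still allows the covering and chaining to be carried out with a uniformly bounded number of balls; the interior Moser iteration itself is entirely routine.
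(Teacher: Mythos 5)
The paper does not prove this lemma at all: it is quoted as a known result, cited from \cite{Lin-Ni,Peng-Shi,Peng-Shi-Wang2}, and then used as a black box in the a priori estimates of Theorem 3.4. So there is no in-paper proof to compare against; what your proposal does is reconstruct the argument that lives behind those citations, and it reconstructs it along the standard and correct lines. The route you describe --- Caccioppoli inequalities from testing with $\eta^2u^\beta$, Moser iteration yielding a local maximum principle and a weak Harnack inequality, with $q>N/2$ correctly identified as exactly the integrability threshold that lets the term $c(x)u$ be absorbed, then a boundary treatment and a covering/chaining argument over the compact connected set $\overline\Omega$ --- is precisely how this Neumann Harnack inequality is proved in the elliptic literature. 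The one step where your sketch conceals genuine work is the boundary reflection: after a bi-Lipschitz flattening the principal part is no longer the Laplacian but a divergence-form operator $\mathrm{div}\left(A\nabla\cdot\right)$ with merely bounded measurable coefficients, the Neumann condition becomes the conormal condition $A\nabla u\cdot\nu=0$, and the even extension of $u$ across the flattened boundary is a weak solution only if $A$ is simultaneously extended with the correct parities (even in the tangential-tangential and normal-normal entries, odd in the mixed entries); the reflected lower-order coefficient does remain in $L^q$, as you note. Since the interior De Giorgi--Nash--Moser theory you invoke applies to such divergence-form operators with an $L^q$ zeroth-order term, the argument closes, but a complete write-up must state and prove that reflection lemma explicitly, and should run the iteration on $u+\varepsilon$ with $\varepsilon\downarrow 0$ so that the negative and small positive powers of $u$ used as test functions are legitimate.
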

Then, we consider the positive solutions of system \eqref{steadywz1.4} when $\rho=0$.
\begin{lemma}\label{rho0}
Assume that $f$ and $g$ satisfy assumptions $(\mathbf{A'_1})$ and $(\mathbf{A'_2})$, $h$ satisfies
\begin{enumerate}
\item[$(\mathbf{A_4})$] $\left[h(v)-h\left(f(0)\right)\right]\left(v-f(0)\right)<0$ for any $v>0(v\ne f(0))$,
\end{enumerate}
and $h\left(f(0)\right)<0$.
Then, for $\rho=0$, system \eqref{steadywz1.4} has a unique positive steady state $\left(-\f{h(f(0))}{g'(0)},f(0)\right)$.
\end{lemma}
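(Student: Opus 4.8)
When $\rho=0$, system \eqref{steadywz1.4} reduces to
\begin{equation*}
\begin{cases}
  -d_1\Delta w=wg'(0)\left[f(0)-v\right], & x\in \Omega,\\
 -d_2\Delta v=v\left[h(v)+g'(0)w\right], & x\in\Omega, \\
   \partial_\nu w=\partial_\nu v=0,& x\in \partial \Omega,\\
\end{cases}
\end{equation*}
since $q(0)=g'(0)$ and $f(\rho w),\,h(v)$ are evaluated with $\rho w=0$ in the first slot. The plan is to show any positive solution must be the claimed constant. First I would verify the candidate: with $v\equiv f(0)$ the first equation forces $-d_1\Delta w=0$, so $w$ is constant, and then the second equation gives $h(f(0))+g'(0)w=0$, i.e. $w=-h(f(0))/g'(0)$, which is positive precisely because $h(f(0))<0$ and $g'(0)>0$. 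So $\bigl(-h(f(0))/g'(0),\,f(0)\bigr)$ is indeed a constant positive steady state.

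\textbf{The uniqueness argument.}
For uniqueness I would integrate the equations over $\Omega$ against the Neumann condition. Integrating the first equation gives $\int_\Omega w\,[f(0)-v]\,dx=0$, so $v$ must equal $f(0)$ somewhere (or $v\equiv f(0)$); integrating the second gives $\int_\Omega v\,[h(v)+g'(0)w]\,dx=0$. The cleaner route is to prove $v\equiv f(0)$ directly. I expect the main obstacle to be controlling $v$ without yet knowing $w$ is constant, and this is exactly where assumption $(\mathbf{A_4})$ and the two cited lemmas enter. Using Lemma \ref{maxin} on the $v$-equation together with the sign condition $h(v)<0$ for large $v$ yields an a priori upper bound $v\le K$, and positivity of $w$ is inherited from the Harnack inequality of Lemma \ref{harnack} applied to the first equation written as $\Delta w+c(x)w=0$ with $c(x)=\tfrac{g'(0)}{d_1}[f(0)-v]\in L^\infty$.

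\textbf{Forcing $v$ to be constant.}
The key step is to test the second equation with $v-f(0)$. Since $-d_2\Delta v=v[h(v)+g'(0)w]$ and, from the first equation, $g'(0)w(f(0)-v)=-\tfrac{d_1}{w}|\text{(something)}|$ is awkward, I would instead combine the two equations. Multiply the first by a suitable factor and the second by another so that the coupling term $g'(0)w\,v$ cancels upon integration; concretely, dividing the first equation by $w$ (legitimate since $w>0$ by Harnack) gives $-d_1\Delta w/w=g'(0)[f(0)-v]$, and integrating the identity $\int_\Omega \Delta w/w = \int_\Omega |\nabla w|^2/w^2\ge 0$ shows $\int_\Omega g'(0)[f(0)-v]\le 0$. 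Pairing this with the integrated second equation and invoking $(\mathbf{A_4})$, which makes $[h(v)-h(f(0))](v-f(0))$ strictly negative off the equilibrium, I would derive that the only consistent possibility is $v\equiv f(0)$. Once $v\equiv f(0)$, the first equation immediately forces $\nabla w\equiv 0$, hence $w$ is the constant determined above, completing the proof.
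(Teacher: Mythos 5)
Your existence check is fine, and your stated plan---multiply the two equations by ``suitable factors'' so that the coupling term cancels upon integration, then invoke $(\mathbf{A_4})$---is exactly the right idea. But you never exhibit those factors, and the computations you actually write down are provably insufficient. Concretely, you establish three facts: $\int_\Omega w\,[f(0)-v]\,dx=0$ and $\int_\Omega v\,[h(v)+g'(0)w]\,dx=0$ (integrating the equations), and $\int_\Omega [f(0)-v]\,dx\le 0$ (dividing the first equation by $w$ and integrating). In none of these has the coupling cancelled: the integrated second equation still carries the term $g'(0)\int_\Omega wv\,dx$ involving the unknown $w$, and the multiplier $1/w$ you apply to the first equation produces $g'(0)\int_\Omega[f(0)-v]\,dx$, which has nothing to pair against it. Moreover, these three integral identities together with $(\mathbf{A_4})$ genuinely cannot force $v\equiv f(0)$: writing $v_*=f(0)$, for a nonconstant positive $v$ close to $v_*$ with $\int_\Omega(v-v_*)\,dx\ge0$, the other two identities are merely two linear constraints on $w$ and admit positive solutions $w$. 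So your sentence ``pairing this with the integrated second equation and invoking $(\mathbf{A_4})$ \dots I would derive that the only consistent possibility is $v\equiv f(0)$'' is an assertion, not a proof; the crux of the lemma is precisely the missing choice of multipliers. (The appeals to Lemmas \ref{maxin} and \ref{harnack} are harmless but unnecessary here, since positivity of the solution is part of the hypothesis.)

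For comparison, the paper's proof tests the first equation with $(w-w_*)/w$ and the second with $(v-v_*)/v$, where $(w_*,v_*)=\left(-h(f(0))/g'(0),\,f(0)\right)$. Since $h(v)+g'(0)w=[h(v)-h(v_*)]+g'(0)(w-w_*)$, the two coupling terms $g'(0)(w-w_*)(v_*-v)$ and $g'(0)(v-v_*)(w-w_*)$ cancel pointwise, leaving
\begin{equation*}
0=-\int_{\Omega}\left[d_1\frac{w_*|\nabla w|^2}{w^2}+d_2\frac{v_*|\nabla v|^2}{v^2}\right]dx+\int_{\Omega}(v-v_*)\left[h(v)-h(v_*)\right]dx,
\end{equation*}
in which every term is nonpositive by $(\mathbf{A_4})$; hence $v\equiv v_*$, $w$ is constant, and the second equation gives $w\equiv w_*$. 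Your route can in fact be completed by adding the one identity you omitted: test the second equation with $1/v$ to get $\int_\Omega[h(v)+g'(0)w]\,dx\le 0$. Then your identities give $\int_\Omega vh(v)\,dx=-g'(0)v_*\int_\Omega w\,dx\ge v_*\int_\Omega h(v)\,dx$, i.e.\ $\int_\Omega(v-v_*)h(v)\,dx\ge0$, while $(\mathbf{A_4})$, $h(v_*)<0$ and your inequality $\int_\Omega(v-v_*)\,dx\ge0$ give $\int_\Omega(v-v_*)h(v)\,dx\le h(v_*)\int_\Omega(v-v_*)\,dx\le 0$; equality throughout plus the strictness in $(\mathbf{A_4})$ forces $v\equiv v_*$. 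Note that the multipliers $1$, $1/w$, $1$, $1/v$ used in this repair recombine into the paper's $1-w_*/w$ and $1-v_*/v$, so the completed argument is the paper's Lyapunov-type functional in disguise.
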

\begin{proof}
Since $h(f(0))<0$, system \eqref{steadywz1.4} has a constant positive steady state $(w_*,v_*)=\left(-\f{h(f(0))}{g'(0)},f(0)\right)$ for $\rho=0$.
We construct the following function
\begin{eqnarray*}
G(w,v)&:=&\int_{\Omega}\left\{\ds\f{w-w_*}{w}\left[d_1\Delta w+wg'(0)(f(0)- v)\right]\right \}dx\\
&+&\int_{\Omega}\left\{\ds\f{v-v_*}{v}\left[d_2\Delta v+v\left(h(v)+ g'(0)w\right)\right]\right\}dx\\
&=&-\int_{\Omega}\left[d_1\ds\f{w_*|\nabla w|^2}{w^2}+d_2\ds\f{v_*|\nabla v|^2}{v^2}\right]dx\\
&+&\int_{\Omega}(v-v_*)[h(v)-h(v_*)]dx.
\end{eqnarray*}
Therefore, if $(w(x),v(x))$ is a positive solution of system \eqref{steadywz1.4} for $\rho=0$, then $$G(w(x),v(x))=0.$$
Since $h$ satisfies $(\mathbf{A_4})$, it follows that $(w(x),v(x))\equiv(w_*,v_*)$.
\end{proof}

Based on Lemmas \ref{maxin} and \ref{harnack}, we will give a \textit{priori} estimates for positive solutions of system \eqref{steadywz1.4} under the following assumption $(\mathbf{A_5})$.
\begin{enumerate}
\item [$(\mathbf{A_5})$]  $h\in C^1(\overline{\mathbb {R}^+})$ and there exist $n\in \mathbb{N}^+$, $\{q_i\}_{i=0}^{n}$, $\{k_i\}_{i=0}^n$ and $\{\overline k_i\}_{i=0}^n$ such that $$\sum_{i=0}^n k_i v^{q_i}\le -h(v)\le \sum_{i=0}^n \overline k_i v^{q_i}\;\;\text{for any}\;\;v\ge0,$$
    where $0= q_0<q_1<q_2<\dots< q_n$, $q_n>\ds\f{1}{2}$ and $k_n,\overline k_n>0$.
\end{enumerate}
The above mentioned assumptions $(\mathbf{A_4})$ and $(\mathbf{A_5})$ are not strong, and we will remark that many common used growth rate per capita functions satisfy $(\mathbf{A_4})$ and $(\mathbf{A_5})$ at the end of this section.
\begin{theorem}\label{imesti}
Assume that $f$, $g$ and $h$ satisfy assumptions $(\mathbf{A'_1})$, $(\mathbf{A'_2})$ and $(\mathbf{A_5})$, $\Omega$ is a bounded domain in $\mathbb{R}^N(N\le3)$ with a smooth boundary $\partial\Omega$, and $(w_{\rho},v_{\rho})$ is a positive solution of system \eqref{steadywz1.4}. Denote
 \begin{equation}\label{pu2}
v_0=\begin{cases}
   \max\{v\ge0:h(v)=0\}, & \text{ if } \{v\ge0:h(v)=0\}\ne\emptyset,\\
  0, & \text{ if }\{v\ge0:h(v)=0\}=\emptyset.
\end{cases}
\end{equation}
Then the following two statements are true.
\begin{enumerate}
\item [$(1)$] There exists $\overline C>0$ such that $\sup_{x\in\Omega} w_{\rho},\sup_{x\in\Omega}v_{\rho}<\overline C$ for all $\rho>0$.
\item [$(2)$] If $f(0)>v_0$, then there exists $M>0$ such that
$$\inf_{0\le \rho\le M}\inf_{x\in \Omega}w_{\rho}>0\;\;\text{and}\;\; \inf_{0\le \rho\le M}\inf_{x\in\Omega}v_{\rho}>0.$$
\end{enumerate}
\end{theorem}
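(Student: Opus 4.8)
The plan is to reduce everything to $\rho$-independent constants by first controlling the coefficients $q(\rho w_\rho)$ and $f(\rho w_\rho)$, then extracting uniform integral bounds, and finally bootstrapping to $L^\infty$ via the Harnack inequality and the maximum principle. First I would apply Lemma \ref{maxin} to the $w$-equation of \eqref{steadywz1.4}: whenever $\rho w>a$ one has $f(\rho w)<0\le v$ while $wq(\rho w)>0$, so $wq(\rho w)[f(\rho w)-v]<0$, and Lemma \ref{maxin} forces $\rho w_\rho\le a$ (equivalently $u_\rho\le a$) for every $\rho$. Consequently $q(\rho w_\rho)$ and $f(\rho w_\rho)$ take values only in the $\rho$-independent ranges $[q_{\min},q_{\max}]$ and $(-\infty,f_{\max}]$ coming from the compact interval $[0,a]$, where $q_{\min}=\min_{[0,a]}q>0$ and $f_{\max}=\max_{[0,a]}f>0$ by $(\mathbf{A'_1})$ and $(\mathbf{A'_2})$.

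Next I would derive two integral identities for the classical positive solution $(w_\rho,v_\rho)$. Adding the two equations cancels the interaction terms and gives $\int_\Omega Ff(\rho w)\,dx=\int_\Omega v(-h(v))\,dx$, where $F:=wq(\rho w)\in[q_{\min}w,q_{\max}w]$; dividing the $v$-equation by $v>0$ and integrating by parts (the Neumann condition removes the boundary term and leaves $-d_2\int_\Omega|\nabla v|^2/v^2\le0$) gives $\int_\Omega F\,dx\le\int_\Omega(-h(v))\,dx$. Now I would feed in $(\mathbf{A_5})$ in the two forms $-h(v)\ge\frac{k_n}{2}v^{q_n}-C_0$ and $-h(v)\le C_1(1+v^{q_n})$ (the lower-order powers absorbed by Young's inequality): the second identity yields $\int_\Omega F\le C_1(|\Omega|+\int_\Omega v^{q_n})$, and plugging this into the first together with the lower growth bound produces $\frac{k_n}{2}\int_\Omega v^{q_n+1}\le f_{\max}C_1\int_\Omega v^{q_n}+C_0\int_\Omega v+C$. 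A final application of Young's inequality dominates $\int_\Omega v^{q_n}$ and $\int_\Omega v$ by $\tfrac12\cdot\frac{k_n}{2}\int_\Omega v^{q_n+1}$ plus a constant, closing the loop to the uniform bound $\int_\Omega v^{q_n+1}\,dx\le C$; then $\int_\Omega F\le C$ and hence $\int_\Omega w\,dx\le C$ uniformly. Combining the two identities with $(\mathbf{A_5})$ in this way is the crux of part $(1)$.

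To upgrade these to sup-bounds I would read the $w$-equation as $\Delta w+c(x)w=0$ with $c(x)=d_1^{-1}q(\rho w)[f(\rho w)-v]$. Since $q(\rho w),f(\rho w)$ are bounded, $\|c\|_{L^{q_n+1}}\le C(1+\|v\|_{L^{q_n+1}})\le C$ uniformly, and $q_n>\tfrac12$ guarantees $q_n+1>N/2$ for $N\le3$, so Lemma \ref{harnack} applies with a $\rho$-independent constant: $\sup_\Omega w\le C_H\inf_\Omega w\le C_H|\Omega|^{-1}\int_\Omega w\le\overline C_w$. With $\sup_\Omega w$ bounded, $F=wq(\rho w)$ is bounded by a constant $\overline F$, so the $v$-equation satisfies $v[h(v)+F]<0$ once $v$ exceeds the $\rho$-independent threshold $K$ determined by $h(K)+\overline F<0$ (such $K$ exists because $h(v)\to-\infty$), and Lemma \ref{maxin} gives $\sup_\Omega v\le K$. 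Taking $\overline C=\max\{\overline C_w,K\}$ proves $(1)$; note this is exactly where $q_n>\tfrac12$ is needed.

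For $(2)$ I would argue by contradiction and compactness. If no such $M$ existed, then choosing $M=1/j$ would produce $\rho_j\to0$ together with positive solutions satisfying $\inf_\Omega w_{\rho_j}\to0$ or $\inf_\Omega v_{\rho_j}\to0$. By $(1)$ the right-hand sides are uniformly bounded in $L^\infty$, so elliptic regularity and $W^{2,p}\hookrightarrow C^{1,\alpha}$ ($N\le3$) yield, along a subsequence, $C^1$-convergence to a nonnegative solution $(w_*,v_*)$ of the $\rho=0$ system $-d_1\Delta w_*=g'(0)w_*[f(0)-v_*]$, $-d_2\Delta v_*=v_*[h(v_*)+g'(0)w_*]$ (using $\rho_j w_j\to0$ uniformly, so $q(\rho_j w_j)\to g'(0)$ and $f(\rho_j w_j)\to f(0)$). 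In either case I would show $w_*\equiv0$: if $\inf_\Omega w_{\rho_j}\to0$ then $w_*\ge0$ vanishes somewhere and Lemma \ref{harnack} forces $w_*\equiv0$; if $\inf_\Omega v_{\rho_j}\to0$ then likewise $v_*\equiv0$, whence $-d_1\Delta w_*=g'(0)f(0)w_*$ integrates (with $f(0)>0$) to $w_*\equiv0$. With $w_*\equiv0$ the limit $v_*$ solves $-d_2\Delta v_*=v_*h(v_*)$, so Lemma \ref{maxin} gives $v_*\le v_0$ since $h<0$ beyond its largest zero $v_0$. Finally, normalizing $\phi_j=w_{\rho_j}/\|w_{\rho_j}\|_\infty$ and passing to the limit produces $\phi_*>0$ (by Lemma \ref{harnack}) solving $-d_1\Delta\phi_*=g'(0)\phi_*[f(0)-v_*]$; integrating gives $\int_\Omega\phi_*[f(0)-v_*]\,dx=0$, which is impossible because $f(0)-v_*\ge f(0)-v_0>0$. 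The main obstacle of the whole theorem is, in part $(1)$, securing the $\rho$-uniform integral estimate from the two identities and $(\mathbf{A_5})$, and, in part $(2)$, correctly treating the degenerate limit $w_*\equiv0$ so that the normalized equation together with the hypothesis $f(0)>v_0$ yields the contradiction.
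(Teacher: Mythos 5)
Your proposal is correct, and its two halves relate differently to the paper's own argument. Part (1) is essentially the paper's proof: Lemma \ref{maxin} applied to $\rho w_\rho$ gives $\rho w_\rho\le a$; the same two integral identities (adding the two equations, and dividing the $v$-equation by $v$ and integrating) combined with $(\mathbf{A_5})$ give the uniform bound on $\|v_\rho\|_{L^{q_n+1}}$ (the paper absorbs the lower-order powers via H\"older and explicit constants $\sigma_i,\overline\sigma_i$ where you use Young's inequality, an immaterial difference); then Lemma \ref{harnack} with coefficient bounded in $L^{q_n+1}$, $q_n+1>\f{3}{2}\ge\f{N}{2}$, together with the bound on $\inf_\Omega w_\rho$ coming from the identities, yields $\sup_\Omega w_\rho\le C$, and a final application of Lemma \ref{maxin} to the $v$-equation closes the argument. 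Part (2), however, takes a genuinely different route. The paper never passes to a limit problem: it proves the two lower bounds separately by contradiction, upgrading $\inf_\Omega w_{\rho_j}\to0$ to $\sup_\Omega w_{\rho_j}\to0$ via Harnack and then splitting into cases according to whether $h$ has a zero --- when $h<0$ everywhere the integrated $v$-equation gives a sign contradiction, and when $v_0>0$ a comparison argument gives $v_{\rho_j}\le v_\epsilon<f(0)$ and then a minimum-principle bound $\rho_j w_{\rho_j}\ge w_0>0$, contradicting $\rho_j w_{\rho_j}\to0$; the $v$-lower bound comes from the integrated $w$-equation, whose integrand becomes strictly positive. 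You instead use compactness: uniform $L^\infty$ bounds plus $L^p$ elliptic estimates give $C^1$-convergence along a subsequence to a nonnegative solution of the $\rho=0$ system, you show $w_*\equiv0$ and $v_*\le v_0$ in all cases, and the normalization $\phi_j=w_{\rho_j}/\|w_{\rho_j}\|_\infty$ with one more Harnack bound and an integration yields $\int_\Omega\phi_*\left[f(0)-v_*\right]dx=0$, impossible since $\phi_*>0$ and $f(0)>v_0$. Your route is more unified (no case analysis on the zeros of $h$, and both lower bounds fall out of one scheme), at the cost of invoking the $W^{2,p}$ regularity and limiting machinery that the paper defers to the proof of Theorem \ref{mainr}; the paper's route stays at the more elementary level of integral identities and maximum principles.
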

\begin{proof}
Since $h$ satisfies assumption $(\mathbf{A_5})$, we see that $\lim_{v\to\infty}h(v)=-\infty$, which implies that $v_0$ is well defined.
From Eq. \eqref{steadywz1.4}, we obtain
\begin{equation}\label{eswv}
\begin{split}
&-\int_{\Omega}v_{\rho}h(v_\rho)dx=\int_{\Omega}w_{\rho}q\left(\rho w_{\rho}\right)f\left(\rho w_{\rho}\right)dx,\\
&\int_{\Omega}\left[h(v_\rho)+w_\rho q(\rho w_\rho)\right]dx=-d_2\int_{\Omega}\ds\f{|\nabla v_{\rho}|^2}{v_{\rho}^2}dx\le0.
\end{split}
\end{equation}
Since $\rho w_\rho$ satisfies $$-d_1\rho\Delta  w_{\rho}\le\rho w_{\rho} f(\rho w_\rho)g(\rho w_\rho),$$
it follows from Lemma \ref{maxin} (see also \cite{Lieberman,Lou-Ni,Peng-Shi-Wang2}) that $\rho w_\rho\le a$, where $a>0$ is the unique zero of $f(u)$.
Noticing that $$\max_{u\in[0,a]}f(u),\max_{u\in[0,a]}q(u),\min_{u\in[0,a]}q(u)>0,$$
from Eq. \eqref{eswv}, we have
\begin{equation}\label{hv}
-\int_{\Omega}v_{\rho}h(v_\rho)dx\le -\ds\f{\max_{u\in[0,a]}f(u)\max_{u\in[0,a]}q(u)}{\min_{u\in[0,a]}q(u)}\int_{\Omega}h(v_\rho)dx.
\end{equation}
This relation and assumption $(\mathbf{A_5})$ imply
\begin{equation}\label{isum}
\begin{split}
-\int_{\Omega}h(v_\rho)dx\le& \sum_{i=0}^{n}\overline \sigma_i\|v_\rho\|^{q_i}_{q_n+1},\\
k_n \|v_\rho\|_{q_n+1}^{q_n+1}\le& \sum_{i=0}^{n-1}\sigma_i\|v_\rho\|^{q_i+1}_{q_n+1}+\sum_{i=0}^{n}\overline \sigma_i\|v_\rho\|^{q_i}_{q_n+1},
\end{split}
\end{equation}
where $\{\sigma_i\}_{i=0}^{n-1}$ and $\{\overline\sigma_i\}_{i=0}^n$ are positive and depend only on $f$, $g$, $\{q_i\}_{i=0}^n$, $\{k_i\}_{i=0}^n$, $\{\overline k_i\}_{i=0}^n$ and $\Omega$.
Therefore, there exists a constant $C_1>0$ such that
$\|v_\rho\|_{q_n+1}\le C_1$ for all $\rho\ge0$,
 which implies
$$\|q(\rho w)\left[f(\rho w)-v\right]\|_{q_n+1}\le\max_{u\in[0,a]}q(u)\left(\max_{u\in[0,a]}f(u)|\Omega|^{\f{1}{q_n+1}}+C_1\right).$$
Since $N\le 3$ and $q_n+1>\ds\f{3}{2}\ge\ds\f{N}{2}$, from Lemma \ref{harnack} we see that there exists $C_2>0$ such that
\begin{equation}\label{wharnack}\sup_{x\in\Omega} w_\rho \le C_2\inf_{x\in\Omega}w_\rho \;\;\text{for all}\;\;\rho\ge0.\end{equation}
It follows from Eqs. \eqref{eswv} and \eqref{isum} that
\begin{equation}\label{infw}
\inf_{x\in\Omega}w_{\rho}<\ds\f{\sum_{i=0}^{n}\overline \sigma_iC_1^{q_i}}{\min_{u\in[0,a]}q(u)|\Omega|}\;\;\text{for all}\;\;\rho\ge0.\end{equation}
Therefore, from Eq. \eqref{wharnack} and \eqref{infw}, we see that there exists a constant $C_3>0$ such that
\begin{equation}\label{wupper}
\sup_{x\in\Omega}w_\rho\le C_3\;\;\text{for all}\;\;\rho\ge0.
\end{equation}
 Consequently,
\begin{equation}\label{vupper}
-d_2\Delta v_\rho=v_\rho\left[h(v_\rho) +q(\rho w_\rho)w_\rho\right]\le v_\rho\left[h(v_\rho)+\max_{u\in[0,a]}q(u)C_3\right].
\end{equation}
It follows from assumption $(\mathbf{A_5})$ that $\lim_{v\to\infty}h(v)=-\infty$, which implies that there exists $C_4>0$
such that
\begin{equation}\label{C4}
h(v)+\max_{u\in[0,a]}q(u)C_3<0\;\;\text{for}\;\; v>C_4.
\end{equation}
Therefore, from Eqs. \eqref{vupper} and \eqref{C4} and Lemma \ref{maxin}, we obtain
\begin{equation}
\sup_{x\in\Omega} z_\rho\le C_4\;\;\text{for all}\;\;\rho\ge0.
\end{equation}
Letting $\overline C=\max\{C_3,C_4\}$, we have
\begin{equation}\label{esup}\sup_{x\in\Omega} w_{\rho},\;\sup_{x\in\Omega}v_{\rho}<\overline C\;\;\text{for all}\;\;\rho\ge0.\end{equation}

In the following, we find the lower bound of $w_\rho$ and $v_\rho$.
We first claim that there exists $M_1>0$ such that
\begin{equation}\label{inzz}\inf_{0\le \rho\le M_1}\inf_{x\in \Omega}w_{\rho}>0.\end{equation}
By way of contradiction, there exists a sequence $\{\rho_{j}\}_{j=1}^\infty$ such that $\lim_{j\to\infty}\rho_{j}=0$ and
$\lim_{j\to\infty}\inf_{x\in\Omega} w_{\rho_{j}}=0$, which implies that $\lim_{j\to\infty}\sup_{x\in\Omega} w_{\rho_{j}}=0$ from Eq. \eqref{wharnack}.
Then we only need to consider two cases.\\
Case I. $\{v\ge0:h(v)=0\}=\emptyset$. Then $v_0=0<f(0)$.
Noticing that $\lim_{v\to\infty}h(v)=-\infty$ from assumption $(A_5)$, we obtain $h(v)<0$ for $v\ge0$ and $\max_{v\ge0}h(v)<0$. Since $\lim_{j\to\infty}\sup_{x\in\Omega} w_{\rho_{j}}=0$, we have
$$\int_{\Omega} v_{\rho_{j}}\left[-h(v_{\rho_{j}}) -q(\rho_j w_{\rho_{j}})w_{\rho_{j}}\right]dx\ge\int_\Omega v_{\rho_{j}}\left[-\max_{v\ge0}h(v)-q(\rho_j w_{\rho_{j}})w_{\rho_{j}}\right]>0$$
for sufficiently large $j$, which contradicts with the fact that
$$\int_{\Omega} v_{\rho_{j}}\left[-h(v_{\rho_{j}}) -q(\rho_j w_{\rho_{j}})w_{\rho_{j}}\right]dx=0.$$
Case II. $\{v\ge0:h(v)=0\}\ne \emptyset$ and $v_0<f(0)$.
Since $\lim_{j\to\infty}\sup_{x\in\Omega} w_{\rho_{j}}=0$,
we see that, for any $\epsilon>0$, there exists $j_0(\epsilon)>0$ such that
$\sup_{x\in\Omega}|w_{\rho_{j}}|\max_{u\in[0,a]} q(u)<\epsilon$ for any $j>j_0(\epsilon)$, which implies that
\begin{equation*}-d\Delta v_{\rho_{j}}\le v_{\rho_{j}}\left[h\left(v_{\rho_{j}}\right)+\epsilon\right]\;\;\text{for}\;\;j>j_0(\epsilon).\end{equation*}
This relation and Lemma \ref{maxin} lead to $$v_{\rho_j}\le v_\epsilon:=\max\{v\ge0:h(v)+\epsilon=0\}\;\;\text{for}\;\;j>j_0(\epsilon).$$
It follows from $\lim_{\epsilon\to0}v_\epsilon=v_0<f(0)$ that $v_\epsilon <f(0)$ for sufficiently small $\epsilon$, and without loss of generality, we assume $v_\epsilon <f(0)$. Since
\begin{equation}\label{ppw}
\begin{split}
-d_1\rho_{j} \Delta w_{\rho_{j}}=&\rho_{j} w_{\rho_{j}} g\left(\rho_{j} w_{\rho_{j}}\right)\left[f\left( \rho_{j}w_{\rho_{j}}\right)-v_{\rho_{j}}\right]\\ \ge& \rho_{j} w_{\rho_{j}} g\left(\rho_{j} w_{\rho_{j}}\right)\left[f\left( \rho_{j}w_{\rho_{j}}\right)-v_\epsilon\right]
\end{split}
\end{equation}
for $j>j_0(\epsilon)$, it follows from Lemma \ref{maxin} and $v_\epsilon <f(0)$ that there exists $w_0>0$ such that
$\rho_{j}w_{\rho_{j}}\ge w_0>0$ for $j>j_0(\epsilon)$, which contradicts with the fact that
$\lim_{j\to\infty}\rho_{j}=0$ and $\lim_{j\to\infty}\sup_{x\in\Omega} w_{\rho_{j}}=0$.\\
Therefore, the claim is proved and Eq. \eqref{inzz} hold. Then, we prove that there exists $M_2>0$ such that
\begin{equation}\label{inzz2}\inf_{0\le \rho\le M_2}\inf_{x\in \Omega}v_{\rho}>0.\end{equation}
Assuming the contrary, we see that there exists a sequence $\{\rho_i\}_{i=1}^\infty$ such that $\lim_{i\to\infty}\rho_{i}=0$ and
$\lim_{i\to\infty}\inf_{x\in\Omega} v_{\rho_{i}}=0$.
Since $$\|h(v_{\rho})+q(\rho w_\rho)w_\rho\|_{\infty}\le \max_{v\in{[0,\overline C]}}|h(v)|+\max_{u\in[0,a]}q(u)\overline C,$$
it follows from Lemma \ref{harnack} that $\lim_{i\to\infty}\sup_{x\in\Omega} v_{\rho_{i}}=0$, and consequently, $$\lim_{i\to\infty}\sup_{x\in\Omega} \rho_{i}v_{\rho_{i}}=0.$$
Therefore,
$$\int_{x\in\Omega}w_{\rho_{i}}q(\rho_i w_{\rho_{i}})\left[f(\rho_i w_{\rho_{i}})-v_{\rho_{i}}\right]dx>0$$
for sufficiently large $i$,
which is a contradiction.
Letting $M=\min\{M_1,M_2\}$, we have
$$\inf_{0\le \rho\le M}\inf_{x\in \Omega}w_{\rho}>0\;\;\text{and}\;\; \inf_{0\le \rho\le M}\inf_{x\in\Omega}v_{\rho}>0.$$
This completes the
proof.
\end{proof}
From Theorem \ref{imesti}, we obtain the non-existence of non-constant positive solutions of system \eqref{steadywz1.4} for small $\rho$.
\begin{theorem}\label{mainr}
Assume that $f$, $g$ and $h$ satisfy assumptions $(\mathbf{A'_1})$, $(\mathbf{A'_2})$ and $(\mathbf{A_5})$, and $\Omega$ is a bounded domain in $\mathbb{R}^N(N\le3)$ with a smooth boundary $\partial\Omega$. Then the following two statements are true.
\begin{enumerate}
\item [$(1)$] If $f(0)>v_0$ and $h$ satisfies $(\mathbf{A_4})$, where $v_0$ is defined as in Eq. \eqref{pu2}, then there exists $c_0>0$, depending on $f$, $g$, $h$, $d_1$, $d_2$ and $\Omega$, such that system \eqref{1.4} has a unique constant positive steady state and no non-constant positive steady states for any $c>c_0$.
\item [$(2)$] If $f(0)<v_0$ and $h(v)>0$ for $v<v_0$, then there exists $c_0>0$, depending on $f$, $g$, $h$, $d_1$, $d_2$ and $\Omega$, such that system \eqref{1.4} has no positive steady states for any $c>c_0$.
\end{enumerate}
\end{theorem}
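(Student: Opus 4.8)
The plan is to argue entirely in terms of the rescaled system \eqref{steadywz1.4}, using the equivalence (already recorded in the excerpt) between non-existence of positive steady states of \eqref{1.4} for large $c$ and for \eqref{steadywz1.4} with $\rho=1/c$ small; the two inputs are the a priori bounds of Theorem \ref{imesti} and the rigidity of the limit problem in Lemma \ref{rho0}. I would dispatch statement (2) first, since it is softer. Suppose $(w_\rho,v_\rho)$ is a positive solution. Evaluating the second equation at a minimum point of $v_\rho$ (interior, by the Neumann condition together with the strong maximum principle) gives $\Delta v_\rho\ge0$ there, hence $h(\min_\Omega v_\rho)+q(\rho w_\rho)w_\rho\le0$; as $q(\rho w_\rho)w_\rho\ge0$ this yields $h(\min_\Omega v_\rho)\le0$, so $\min_\Omega v_\rho\ge v_0$ because $h>0$ on $[0,v_0)$ with $v_0$ as in \eqref{pu2}. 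Since $\sup_\Omega w_\rho<\overline C$ uniformly in $\rho$ by Theorem \ref{imesti}$(1)$, we have $\rho w_\rho\to0$ uniformly, so $f(\rho w_\rho)\to f(0)<v_0\le v_\rho$, whence $f(\rho w_\rho)-v_\rho<0$ everywhere once $\rho$ is small. The first equation then forces $-d_1\Delta w_\rho=w_\rho q(\rho w_\rho)(f(\rho w_\rho)-v_\rho)<0$ pointwise; integrating over $\Omega$ and using $\partial_\nu w_\rho=0$ gives $0=\int_\Omega(-d_1\Delta w_\rho)\,dx<0$, a contradiction. Thus no positive steady state exists for large $c$.

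For statement (1) I would first settle the constant steady states, which correspond to zeros $w$ of $\Phi(\rho,w):=h(f(\rho w))+q(\rho w)w$. At $\rho=0$ the unique zero is $w_*:=-h(f(0))/g'(0)>0$ (here $f(0)>v_0$ forces $h(f(0))<0$ since $h<0$ beyond its largest zero $v_0$), and $\partial_w\Phi(0,w_*)=g'(0)>0$, so the implicit function theorem yields a unique branch $\bar w_\rho\to w_*$ for small $\rho$; the uniform bounds of Theorem \ref{imesti} confine every constant solution to a fixed compact set on which $\Phi(\rho,\cdot)\to\Phi(0,\cdot)$ in $C^1$, excluding other zeros for small $\rho$. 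Setting $\bar v_\rho:=f(\rho\bar w_\rho)\to f(0)=:v_*$ gives the asserted unique constant positive steady state.

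The core is ruling out non-constant solutions. I argue by contradiction: suppose $\rho_j\to0$ admits non-constant positive solutions $(w_j,v_j)$. By Theorem \ref{imesti} they are bounded above and below by positive constants uniformly in $j$; since $N\le3$, $W^{2,p}$ estimates and Sobolev embedding make $\{(w_j,v_j)\}$ precompact in $C^1(\cOm)$, and every limit solves \eqref{steadywz1.4} at $\rho=0$, so by Lemma \ref{rho0} the sequence converges to $(w_*,v_*)$. With $(\bar w_j,\bar v_j)$ the constant steady state at $\rho_j$, set $\epsilon_j:=\|(w_j-\bar w_j,\,v_j-\bar v_j)\|_{L^2(\Omega)}>0$ (positive by uniqueness of the constant solution) and normalize $\tilde w_j:=(w_j-\bar w_j)/\epsilon_j$, $\tilde v_j:=(v_j-\bar v_j)/\epsilon_j$, so $\|\tilde w_j\|_{L^2}^2+\|\tilde v_j\|_{L^2}^2=1$. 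Subtracting the reaction terms evaluated at $(\bar w_j,\bar v_j)$, which vanish, and dividing by $\epsilon_j$ linearizes the equations: the first-order coefficients converge to those of the linearization of \eqref{steadywz1.4} at $(w_*,v_*)$ with $\rho=0$, namely
\begin{equation*}
-d_1\Delta\tilde w=-w_*g'(0)\,\tilde v,\qquad -d_2\Delta\tilde v=v_*g'(0)\,\tilde w+v_*h'(v_*)\,\tilde v,
\end{equation*}
under homogeneous Neumann data, while the remainders are $O(\epsilon_j)+O(\rho_j)\to0$ in $L^2$. Elliptic estimates bound $(\tilde w_j,\tilde v_j)$ in $H^2$, so a subsequence converges strongly in $L^2$ to $(\tilde w,\tilde v)$ with $\|\tilde w\|_{L^2}^2+\|\tilde v\|_{L^2}^2=1$ solving this linear system. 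Testing with $\tilde w/w_*$ and $\tilde v/v_*$ and adding yields $\f{d_1}{w_*}\int_\Omega|\nabla\tilde w|^2+\f{d_2}{v_*}\int_\Omega|\nabla\tilde v|^2=h'(v_*)\int_\Omega\tilde v^2$; since $(\mathbf{A_4})$ gives $h'(v_*)=h'(f(0))\le0$, both sides vanish, so $\tilde w,\tilde v$ are constant, and feeding constants back into the linear system forces $\tilde w=\tilde v=0$, contradicting the normalization. Hence non-constant solutions cannot exist for large $c$.

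The main obstacle is the linearization step, and it has two subtle points. First, the normalization must be centered at the exact constant steady state $(\bar w_j,\bar v_j)$ and not at the spatial mean, so that the inhomogeneous $O(\rho_j)$ contributions cancel and only quadratic-in-deviation and $\rho_j$-times-deviation remainders survive; these vanish after division by $\epsilon_j$ regardless of the relative rates of $\rho_j$ and $\epsilon_j$, which is exactly what a mean-centered normalization would fail to guarantee. Second, assumption $(\mathbf{A_4})$ controls only the sign of $h(v)-h(f(0))$ and need not render $h$ monotone near $f(0)$, so a direct energy identity for fixed $\rho$ is delicate; passing to the linearized limit is precisely the device that reduces the requirement to $h'(f(0))\le0$ at the single value $v_*=f(0)$, covering even the degenerate case $h'(f(0))=0$, where the identity still forces the gradients—and then, through $\Delta(\text{const})=0$, the limit profile itself—to vanish.
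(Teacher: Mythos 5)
Your proof is correct, and it rests on the same scaffolding as the paper's---the rescaling to \eqref{steadywz1.4}, the a priori bounds of Theorem \ref{imesti}, compactness ($W^{2,p}$ estimates plus embedding) combined with Lemma \ref{rho0} to force all positive solutions to accumulate at $(w_*,v_*)=\left(-\frac{h(f(0))}{g'(0)},f(0)\right)$, and the sign $h'(f(0))\le 0$ drawn from $(\mathbf{A_4})$---but you implement the decisive uniqueness step differently. The paper proves $C^2$-convergence of arbitrary positive solutions to $(w_*,v_*)$ and then appeals to the implicit function theorem in function space, relying on the non-degeneracy of $(w_*,v_*)$, which it asserts but does not verify. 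You use the implicit function theorem only finite-dimensionally, for the constant branch determined by $\Phi(\rho,w)=h(f(\rho w))+q(\rho w)w=0$, and you exclude non-constant solutions by a normalized-difference blow-up: the limit of $(\tilde w_j,\tilde v_j)$ is a unit-norm element of the kernel of the linearization at $(w_*,v_*)$, and your energy identity (testing with $\tilde w/w_*$ and $\tilde v/v_*$, whose cross terms cancel) shows that this kernel is trivial whenever $h'(f(0))\le 0$, including the degenerate case $h'(f(0))=0$. In effect your argument contains an explicit proof of exactly the non-degeneracy statement the paper leaves implicit, at the price of a longer write-up; your observation that the normalization must be centered at the exact constant solution $(\bar w_j,\bar v_j)$---so that the inhomogeneous terms vanish identically, independently of the relative rates of $\rho_j$ and $\epsilon_j$---is the technical point that makes the blow-up clean, and it is correct. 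In part $(2)$ the deviation is minor: where the paper invokes Lemma \ref{maxin} to conclude $w_{\rho_j}\le 0$, you integrate the $w$-equation over $\Omega$ under the Neumann condition to contradict the strict negativity of its right-hand side; your preliminary bound $v_\rho\ge v_0$ via the minimum principle is the same step the paper performs with \eqref{inev}. Both variants are sound, so the two proofs differ only in how local rigidity near $(w_*,v_*)$ is certified: abstract IFT in the paper versus an elementary compactness-plus-energy argument in yours.
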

\begin{proof}
Since $f(0)>v_0$, it follows from Eq. \eqref{pu2} that $h(f(0))<0$. This relation and Lemma \ref{rho0} imply that system \eqref{steadywz1.4} has a unique positive solution $(w_*,v_*)=\left(-\f{h(f(0))}{g'(0)},f(0)\right)$ for $\rho=0$.
 Since $h$ satisfies $(\mathbf{A_4})$, we have $h'(f(0))\le0$, which implies that $(w_*,v_*)$ is non-degenerate in the sense that zero is not the eigenvalue of the linearized problem with respect to $(w_*,v_*)$. Then it follows from the implicit function theorem that there exists $\rho_0>0$ such that system \eqref{steadywz1.4} has a constant positive solution $(w_\rho,v_\rho)$ for $0<\rho<\rho_0$. Therefore, the existence is proved, and the uniqueness is proved in the following. By the implicit function theorem, we only need to show that if $(w^\rho,v^\rho)$ is a positive solution of system \eqref{steadywz1.4}, then
$$(w^\rho,v^\rho)\to (w_*,v_*)\;\;\text{in}\;\;C^{2}( \overline \Omega)\times C^{2}( \overline \Omega)\text{ as }\rho\to0.$$
It follows from Theorem \ref{imesti} that
\begin{equation}\label{cstar}
\underline C\le w^\rho,v^\rho\le\overline C \;\;\text{for}\;\;x\in\overline \Omega \text{ and } \rho\in[0,M],
\end{equation}
where $M$ is defined as in Theorem \ref{imesti} and \begin{equation*}\underline C=\min\{\inf_{0\le \rho\le M}\inf_{x\in \Omega}w^{\rho}, \inf_{0\le \rho\le M}\inf_{x\in \Omega}v^{\rho}\}>0.\end{equation*}
This, combined with the $L^p$ theory, implies that $w^\rho$ and $v^\rho$ are bounded in $W^{2,p}(\Omega)$ for any $p>N$. It follows from the embedding theorem that $w^\rho$ and $v^\rho$ are precompact in $C^1(\overline\Omega)$.
This implies that, for any sequence $\{\rho_i\}_{i=1}^\infty$ satisfying $\lim_{i\to\infty}\rho_i=0$, there exist a subsequence $\{\rho_{i_k}\}_{k=1}^\infty$  and $(w^*(x),v^*(x))\in C^1(\overline \Omega)\times C^1(\overline \Omega)$ such that
$$(w^{\rho_{i_k}},v^{\rho_{i_k}})\to (w^*(x),v^*(x))\;\;\text{in}\;\;C^1(\overline \Omega)\times C^1(\overline \Omega)\text{ as }k\to\infty,$$
where $w^*(x)$ and $v^*(x)$ is positive from Eq. \eqref{cstar}.
Note that
\begin{equation}\label{spp}
\begin{split}
w^{\rho_{i_k}}= &[-d_1\Delta+I]^{-1}\left\{ w^{\rho_{i_k}}+w^{\rho_{i_k}}q(\rho_{i_k}w^{\rho_{i_k}})\left[f(\rho_{i_k}w^{\rho_{i_k}})-v^{\rho_{i_k}}\right]\right\},\\
v^{\rho_{i_k}}=&[-d_2\Delta+I]^{-1}\left\{ v^{\rho_{i_k}}+v^{\rho_{i_k}}\left[f(v^{\rho_{i_k}})+q(\rho_{i_k}w^{\rho_{i_k}})w^{\rho_{i_k}}\right]\right\},
\end{split}
\end{equation}
and  $\lim_{k\to\infty}\rho_{i_k}w^{\rho_{i_k}}=0$ in $C^1(\overline \Omega)$. Then, taking the limit of Eq. \eqref{spp}
as $k\to\infty$ and by the Schauder theorem, we see that
$(w^*(x),v^*(x))$ is a positive solution of system \eqref{steadywz1.4} for $\rho=0$, and
$$(w^{\rho_{i_k}},v^{\rho_{i_k}})\to (w^*(x),v^*(x))\;\;\text{in}\;\;C^{2}(\overline\Omega)\times C^{2}(\overline\Omega)\text{ as }k\to\infty.$$
This completes the proof of part $(1)$.

Then, we prove the part $(2)$. It follows from $f(0)<v_0$ and Eq. \eqref{pu2} that $v_0>0$ and $v_0=\max\{v>0:h(v)=0\}$.
Then \begin{equation}\label{inev}
h(v)>0\;\;\text{for}\;\; v<v_0,\;\;\text{and}\;\;h(v)<0\;\;\text{for}\;\; v>v_0.
\end{equation} Assuming the contrary, we see that there exists
a sequence $\{\rho_j\}_{j=1}^\infty$ such that $\lim_{j\to\infty} \rho_j=0$ and system \eqref{steadywz1.4} has a positive solution
$(w_{\rho_j},v_{\rho_j})$ for $\rho=\rho_j$.
Since \begin{equation*}
-d_2\Delta v_{\rho_j}=v_{\rho_j}\left[h(v_{\rho_j}) +q({\rho_j} w_{\rho_j})w_{\rho_j}\right]\ge v_{\rho_j} h(v_{\rho_j}),
\end{equation*}
it follows from \eqref{inev} that $v_{\rho_j}\ge v_0$, which implies that
$$-d_1\Delta w_{\rho_j}\le w_{\rho_j}q(\rho_jw_{\rho_j})\left[f(\rho_jw_{\rho_j})-v_0\right].$$
From Eq. \eqref{esup}, we see that $\lim_{j\to\infty}\rho_{j}w_{\rho_{j}}=0$ uniformly on $\overline\Omega$. This, together with the fact that $f(0)<v_0$, imply that there exists $j_0>0$ such that $f(\rho_j w_{\rho_j})-v_0<0$ for any $j>j_0$. Hence  $w_{\rho_j}\le0$ for $j>j_0$. This contradicts with the fact that $w_{\rho_j}$ is positive.
\end{proof}
\begin{remark}\label{rema}
We remark that assumptions $(\mathbf{A_4})$ and $(\mathbf{A_5})$ are not strong.
Some examples of $h$ satisfying $(\mathbf{A_5})$ are
\begin{enumerate}
\item [(1)]logistic:\begin{equation} \label{log}h(v)=\beta (d-v),\;\;\beta,d>0,\end{equation}
\item [(2)] weak Allee effect: \begin{equation}\label{wall}h(v)=\beta (d-v)(v+p),\;\;d>p>0,\beta>0,\end{equation}
\item [(3)] strong Allee effect: \begin{equation}\label{sall} h(v)=\beta (d-v)(v-p),\;\;d,p,\beta>0\end{equation}
\item [(4)]strong Allee effect: \begin{equation}\label{sall2} h(v)=\beta \ds\f{(d-v)(v-p)}{v+r},\;\;d,p,\beta,r>0.\end{equation}
\end{enumerate}
Moreover, Eq. \eqref{log} always satisfies $(\mathbf{A_4})$, Eq. \eqref{wall} satisfies $(\mathbf{A_4})$ for $f(0)>d-p$, Eq. \eqref{sall} satisfies $(\mathbf{A_4})$ for $f(0)>d+p$, and Eq. \eqref{sall2} satisfies $(\mathbf{A_4})$ for $$f(0)>\ds\f{dp+dr+pr}{r}.$$
\end{remark}
\section{Applications}
In this section, we apply the previously obtained results to some concrete predator-prey models.
\subsection{A predator-prey model with Holling-II functional response}
In this subsection, we consider model \eqref{DL}, which is a predator-prey model with logistic growth rate for predator and Holling type-II functional response. Here $\Omega$ is a bounded domain in $\mathbb{R}^N(N\le3)$ with a smooth boundary $\partial\Omega$, parameters $a$, $b$, $e$, $m$, $d_1$ and $d_2$ are all positive constants, and $d$ may be positive constant, negative constant or zero.
Letting
\begin{equation}\label{H1}
  f(u)=\ds\f{(1+mu)(a-u)}{b},\;\;g(u)=\ds\f{bu}{1+mu},\;\;h(v)=d-v,\;\;\text{and}\;\; c=\ds\f{e}{b},
\end{equation}
system \eqref{1.4} is transformed to system \eqref{DL}, and parameter $c$ in system \eqref{1.4} is equivalent to $e$ in system \eqref{DL}. In this case, $f$, $g$ and $h$ satisfy assumptions $(\mathbf{A'_1})$, $(\mathbf{A'_2})$, $(\mathbf{A_4})$ and $(\mathbf{A_5})$, and
 \begin{equation}\label{pu22}
v_0=\begin{cases}
d, & \text{ if } d\ge0,\\
  0, & d<0,
\end{cases}
\end{equation}
where $v_0$ is defined as in Eq. \eqref{pu2}. Moreover, $h$ satisfies $(\mathbf{A_3})$ if $d>0$.
Then, from Theorem \ref{mainr}, we have the following results on the non-existence of non-constant steady states when conversion rate $e$ is large. This results supplements the result in \cite{Peng-Shi}, which consider the case that $e$ equals to $b$ and is sufficiently large.
\begin{proposition}\label{Pen}
\begin{enumerate}
\item [$(a_1)$] If $a>bd$, then there exists $e_0>0$, depending on $a$, $b$, $d$, $m$, $d_1$, $d_2$ and $\Omega$, such that system \eqref{DL} has a unique constant positive steady state and no non-constant positive steady states for any $e>e_0$.
\item [$(a_2)$] If $a<bd$, then there exists $e_0>0$, depending on $a$, $b$, $d$, $m$, $d_1$, $d_2$ and $\Omega$, such that system \eqref{DL} has no positive steady states for any $e>e_0$.
\end{enumerate}
\end{proposition}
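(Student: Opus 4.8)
The plan is to treat Proposition \ref{Pen} purely as an application of Theorem \ref{mainr}. The text preceding the statement already records that, under the choices in Eq. \eqref{H1}, the functions $f$, $g$ and $h$ satisfy assumptions $(\mathbf{A'_1})$, $(\mathbf{A'_2})$, $(\mathbf{A_4})$ and $(\mathbf{A_5})$, and that $v_0$ is given by Eq. \eqref{pu22}. So the structural hypotheses of Theorem \ref{mainr} are in place, and the only genuine computation I need is $f(0) = a/b$. After that, each part reduces to verifying the sign condition ($f(0) > v_0$ or $f(0) < v_0$) and then converting $c = e/b$ back into the original parameter $e$.

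For part $(a_1)$ I would observe that $a > bd$ is equivalent to $f(0) = a/b > d$, and then split on the sign of $d$. If $d \geq 0$ then $v_0 = d$ by Eq. \eqref{pu22}, so $f(0) > d = v_0$; if $d < 0$ then $v_0 = 0$ and, since $a, b > 0$, we still have $f(0) = a/b > 0 = v_0$. In either regime $f(0) > v_0$, and $h(v) = d - v$ satisfies $(\mathbf{A_4})$ by Remark \ref{rema}, so Theorem \ref{mainr}(1) supplies a $c_0 > 0$ for which system \eqref{1.4} has a unique constant positive steady state and no non-constant ones whenever $c > c_0$. Since $c = e/b$, setting $e_0 = b c_0$ transfers this conclusion to system \eqref{DL} for all $e > e_0$.

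For part $(a_2)$ the hypothesis $a < bd$ forces $bd > a > 0$, hence $d > 0$ and $v_0 = d$. Then $f(0) = a/b < d = v_0$, and $h(v) = d - v > 0$ holds precisely for $v < d = v_0$, so both hypotheses of Theorem \ref{mainr}(2) are met. That theorem gives a $c_0 > 0$ such that system \eqref{1.4} has no positive steady states for $c > c_0$, and once more $e_0 = b c_0$ carries the statement over to system \eqref{DL}.

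The only point requiring care—and the closest thing to an obstacle—is the case distinction on the sign of $d$ in part $(a_1)$. Because $v_0$ is defined piecewise in Eq. \eqref{pu22}, one must confirm that $a > bd$ implies $f(0) > v_0$ in both the $d \geq 0$ and $d < 0$ regimes, rather than naively identifying $a > bd$ with $f(0) > d$. Everything else is a direct substitution into Theorem \ref{mainr}.
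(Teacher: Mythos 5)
Your proposal is correct and is essentially the paper's own argument: the paper states Proposition \ref{Pen} as a direct consequence of Theorem \ref{mainr} under the substitution \eqref{H1}, with the hypothesis checks ($(\mathbf{A'_1})$, $(\mathbf{A'_2})$, $(\mathbf{A_4})$, $(\mathbf{A_5})$, and the formula \eqref{pu22} for $v_0$) recorded in the text immediately before the statement, and the conversion $e_0=bc_0$ implicit in $c=e/b$. Your explicit case split on the sign of $d$ in part $(a_1)$ is a sound (and slightly more careful) rendering of the same step; just note that when $d\le 0$ you should verify $(\mathbf{A_4})$ directly — $\left[h(v)-h(f(0))\right]\left(v-f(0)\right)=-\left(v-f(0)\right)^2<0$ — rather than cite Remark \ref{rema}, whose logistic example assumes $d>0$.
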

From Corollary \ref{cc25}, we have the following results on the global attractivity of constant equilibria for small conversion rate, which also imply the non-existence of non-constant steady states.
\begin{proposition}
 \begin{enumerate}
\item [$(b_1)$] If $d<0$, then there exists $e_0>0$, depending on $a$, $b$, $m$ and $d$, such that the steady state $(a,0)$ of system \eqref{DL} is globally attractive for $e\in(0,e_0)$.
\item [$(b_2)$] If $0<d<\ds\f{a}{b}$, then there exists $e_0>0$, depending on $a$, $b$, $m$ and $d$, such that, for $e\in(0,e_0)$, system \eqref{DL} has a unique constant positive steady state, which is globally attractive.
\item [$(b_3)$] If $d>\f{(am+1)^2}{4mb}$, then the steady state $(0,d)$ of system \eqref{DL} is globally attractive.
\end{enumerate}
\end{proposition}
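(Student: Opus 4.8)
The plan is to treat $(b_2)$ and $(b_3)$ as direct corollaries of Corollary~\ref{cc25}, after checking that the transformed data \eqref{H1} meet the hypotheses $(\mathbf{A_1})$--$(\mathbf{A_3})$ and computing the two quantities $f(0)$ and $\max_{[0,a]}f$ that appear there; the genuinely new work is confined to $(b_1)$, where $d<0$ makes $(\mathbf{A_3})$ fail, so Corollary~\ref{cc25} is unavailable and a direct comparison argument is needed.

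First I would record the elementary facts about \eqref{H1}. For $f(u)=(1+mu)(a-u)/b$ one has $f(0)=a/b$, a unique positive zero at $u=a$, and $f'(u)=(ma-1-2mu)/b$; hence $f$ is either strictly decreasing on $(0,a]$ (when $am\le1$) or unimodal with interior maximum at $\la=(ma-1)/(2m)$ (when $am>1$), so $(\mathbf{A_1})$ holds. Since $g'(u)=b/(1+mu)^2>0$ and $g(0)=0$, $(\mathbf{A_2})$ holds, and $h(v)=d-v$ satisfies $(\mathbf{A_3})$ precisely when $d>0$. A short computation of the vertex value of the downward parabola $f$ gives $\max_{u\in[0,a]}f(u)\le (am+1)^2/(4mb)$, with equality when $am\ge1$.

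With these in hand, $(b_2)$ and $(b_3)$ follow immediately. For $(b_2)$, the condition $0<d<a/b=f(0)$ is exactly the hypothesis $f(0)>d$ of Corollary~\ref{cc25}(1); since $c=e/b$, the threshold $c_0$ provided there becomes $e_0=bc_0$, yielding a unique globally attractive constant positive steady state for $e\in(0,e_0)$. For $(b_3)$, the assumption $d>(am+1)^2/(4mb)\ge\max_{[0,a]}f$ is exactly the hypothesis $\max_{[0,a]}f<d$ of Corollary~\ref{cc25}(2), which gives the global attractivity of $(0,d)$ for all $e>0$ (no $e$-restriction is needed).

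The main obstacle is $(b_1)$, which I would handle by the iterated squeezing behind Lemma~\ref{L2.1}, using Lemma~\ref{L2.0} as the comparison engine. Dropping the nonnegative predation loss gives $u_t\le d_1\Delta u+u(a-u)$, so $\limsup_t u\le a$ and for any small $\epsilon>0$ eventually $u\le a+\epsilon$. Feeding this into the $v$-equation and using the monotonicity of $u/(1+mu)$ yields
$$v_t\le d_2\Delta v+v\Big[d+e\,\tfrac{a+\epsilon}{1+m(a+\epsilon)}-v\Big].$$
Because $d<0$, the bracketed constant is negative once $e<e_0:=-d(1+ma)/a$ and $\epsilon$ is taken small, so the comparison function satisfies the hypothesis of Lemma~\ref{L2.0} with target $0$, forcing $v\to0$ uniformly. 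Finally, once $v\le\delta$ eventually, the bound $buv/(1+mu)\le b\delta u$ gives $u_t\ge d_1\Delta u+u(a-b\delta-u)$, whence $\liminf_t u\ge a-b\delta$ by Lemma~\ref{L2.0}; letting $\delta\to0$ and combining with $\limsup_t u\le a$ forces $u\to a$, so $(u,v)\to(a,0)$. The only delicate point is the order of limits—first choose $e$ small enough relative to $a+\epsilon$ to extinguish $v$, then send $\delta\to0$ to recover $u\to a$—which is precisely the asymptotic-bound bootstrap of Lemma~\ref{L2.1}.
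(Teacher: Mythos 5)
Your proposal is correct, and for items $(b_2)$ and $(b_3)$ it coincides with the paper's route: the paper states this proposition as an immediate consequence of Corollary \ref{cc25} under the substitution \eqref{H1}, and your verification that $f(0)=a/b$, that $(\mathbf{A_1})$--$(\mathbf{A_3})$ hold when $d>0$, and that $\max_{u\in[0,a]}f(u)\le (am+1)^2/(4mb)$ (with equality iff $am\ge 1$) is exactly the bookkeeping the paper leaves implicit. Where you genuinely depart from the paper is $(b_1)$: the paper attributes all three items to Corollary \ref{cc25} and gives no proof, but, as you correctly observe, that corollary does not literally apply when $d<0$, since $h(v)=d-v$ then has no positive zero and $(\mathbf{A_3})$ fails. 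Your direct comparison argument fills this gap: the bootstrap $u\lesssim a+\epsilon$, then $v\to 0$ uniformly once $e<e_0:=-d(1+ma)/a$ (with $\epsilon$ chosen after $e$ so that $d+e\frac{a+\epsilon}{1+m(a+\epsilon)}<0$, which is legitimate since $s\mapsto -d(1+ms)/s$ decreases to $e_0$ as $s\downarrow a$), then $u\gtrsim a-b\delta$ and $\delta\to 0$, is a valid application of Lemma \ref{L2.0} in the same spirit as the proofs of Lemma \ref{L2.1} and Corollary \ref{cc25}(2) (which also invoke Lemma \ref{L2.0} with limit $0$). So your write-up is, if anything, more complete than the paper's on this point; it also yields an explicit threshold $e_0$, whereas the paper's $e_0$ is non-constructive. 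The only cosmetic remark is that your $e_0$ for $(b_1)$ does not actually involve $b$, which is harmless since the proposition only asserts dependence on at most $a$, $b$, $m$, $d$.
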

Similarly, using upper and lower solution method and modifying the arguments in Theorem \ref{T2.2}, we can prove the global attractivity of the constant positive equilibrium with respect to other parameters, for example, $a$ and $m$.
\begin{proposition}\label{p43}
If $d>0$, then there exists $a_0>0$, depending on $b$, $d$, $e$ and $m$, such that, for any $a>a_0$, system \eqref{DL} has a unique constant positive steady state, which is globally attractive.
\end{proposition}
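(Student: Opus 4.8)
The plan is to follow the proof of Theorem~\ref{T2.2} almost verbatim, except that the role of the small parameter $c$ there is now played by the \emph{large} parameter $a$ (the conversion rate $c=e/b$ being held fixed). With $f,g,h$ as in \eqref{H1} and $d>0$, I would first verify that assumptions $(\mathbf{A_1})$--$(\mathbf{A_3})$ and the hypotheses of Lemma~\ref{L2.1} and Theorem~\ref{T2.2} all hold once $a$ is large. A direct computation gives $f'(u)=(ma-1-2mu)/b$, so for $a>1/m$ the prey nullcline $f$ is unimodal with interior maximum at $\lambda=(ma-1)/(2m)$, placing us in Case~II of Theorem~\ref{T2.2}; meanwhile $g$ is increasing and $h(v)=d-v$ satisfies $(\mathbf{A_3})$. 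Moreover $f(0)=a/b\to\infty$ while $d+c\,g(a)=d+ea/(1+ma)$ stays bounded, so both $f(0)>d$ and the key inequality $h(f(0))<-cg(a)$ of Lemma~\ref{L2.1} hold once $a$ exceeds a threshold depending only on $b,d,e,m$.

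Lemma~\ref{L2.1} then furnishes a pair of coupled upper and lower solutions, and the monotone iteration of Theorem~\ref{T2.2} yields limits $(\tilde u,\tilde v)$ and $(\check u,\check v)$ obeying \eqref{2.5}, hence \eqref{fgqu}; equivalently, $\Phi(u):=h(f(u))-cg(u)$ takes equal values at $\check u$ and $\tilde u$. As in Case~II, the bound \eqref{inc} confines both roots to $[\tilde\lambda,a]$, where $f(\tilde\lambda)=f(0)$; for the present $f$ one solves this explicitly to get $\tilde\lambda=a-1/m$. The proposition thus reduces to proving that $\Phi$ is strictly monotone on $[a-1/m,a]$ for large $a$, which forces $\tilde u=\check u$ and then $\tilde v=f(\check u)=f(\tilde u)=\check v$.

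The crux — and the step I expect to be the main obstacle — is controlling the sign of $\Phi'(u)=-f'(u)-c\,g'(u)$ on $[a-1/m,a]$. Since $f'$ is affine and decreasing, $|f'|$ attains its minimum at the left endpoint, $\min_{[a-1/m,a]}|f'|=(ma-1)/b$, which grows like $ma/b$; since $g'(u)=b/(1+mu)^2$ is decreasing, $\max_{[a-1/m,a]}g'=g'(a-1/m)=b/(m^2a^2)$, so the predation contribution $c\,\max g'=e/(m^2a^2)$ tends to $0$. Hence for $a$ large,
\begin{equation*}
\min_{u\in[a-1/m,a]}\left[h\left(f(u)\right)\right]'-c\max_{u\in[a-1/m,a]}g'(u)\ge\frac{ma-1}{b}-\frac{e}{m^2a^2}>0,
\end{equation*}
the exact analogue of the inequality used in Theorem~\ref{T2.2}. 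Feeding this into \eqref{fgqu} precisely as there gives $\tilde u-\check u\le0$, i.e. $\tilde u=\check u$. Conceptually, the smallness of $c$ exploited in Theorem~\ref{T2.2} is here replaced by the blow-up of $|f'|\sim ma/b$ near $u=a$, which dominates the uniformly bounded term $c\,g'$.

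Taking $a_0=a_0(b,d,e,m)$ to exceed all the thresholds above, I conclude that for $a>a_0$ the iteration squeezes to a single limit $(u_*,v_*)=(\tilde u,\tilde v)=(\check u,\check v)$. By the upper and lower solution method \cite{27,pao,4,5}, as invoked at the close of Theorem~\ref{T2.2}, this yields a unique constant positive steady state $(u_*,v_*)$ of \eqref{DL} to which every solution with nonnegative, nontrivial initial data converges uniformly, which is precisely the assertion of Proposition~\ref{p43}.
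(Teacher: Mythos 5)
Your proposal is correct and follows essentially the same route as the paper's own proof: verify the hypothesis $h(f(0))<-cg(a)$ of Lemma \ref{L2.1} for large $a$, run the monotone iteration of Theorem \ref{T2.2}, localize $\check u,\tilde u$ in $[\tilde\la,a]$ with $\tilde\la=a-1/m$ (where $f(\tilde\la)=f(0)$, correcting the paper's typo ``$f(\tilde\la)=0$''), and conclude $\tilde u=\check u$ from strict monotonicity of $h(f(u))-cg(u)$ there. The only cosmetic difference is that you bound $c\max_{[\tilde\la,a]}g'=e/(m^2a^2)$, which is sharper than the paper's crude bound $cg'\le e$ and yields a slightly smaller threshold $a_0$, but the argument is otherwise identical.
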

\begin{proof}
Letting $a_1:=b\left(d+\ds\f{e}{m}\right)$, we have
$$h(f(0))=d-\ds\f{a}{b}<-cg(a)=-\ds\f{ea}{1+ma}$$for any $a>a_1$. This, together with $h(0)=d>0$, imply that the assumption of Lemma \ref{L2.1} hold. Then there exist $(\underline u,\underline v),\;(\overline
u,\overline v)>(0,0)$, which is a pair of coupled upper and lower solution of system \eqref{DL} for $a>a_1$.
Using the similar arguments as Theorem \ref{T2.2}, there exist $(\tilde u,\tilde v)$ and
$(\check u, \check v)$ such that $(\underline u,\underline v)\le(\check u, \check v)\le (\tilde u,\tilde v)\le(\overline
u,\overline v)$, $\lim_{m
\rightarrow \infty}\overline u^{(m)}=\tilde u$, $\lim_{m \rightarrow
\infty}\overline v^{(m)}=\tilde v$, $\lim_{m \rightarrow
\infty}\underline u^{(m)}=\check u$, $\lim_{m \rightarrow
\infty}\underline v^{(m)}=\check v$, and Eqs. \eqref{2.5} and \eqref{fgqu} hold.
If $$a>a_2:=\max\left\{b\left(d+\ds\f{e}{m}\right),\ds\f{1}{m}\right\}, $$ then  $$f(\check u)=\tilde v<\left(d+\ds\f{e}{m}\right)<\ds\f{a}{b}=f(0),$$ which
implies that $\tilde u>\check u>\tilde \la=\ds\f{am-1}{m}>0$, where $f(\tilde \la)=0$. It follows from Eq. \eqref{fgqu} that
\begin{equation}
\begin{split}
0=&h(f(\tilde u))-c g(\tilde u)-h(f(\check u))+c g(\check u)\\
=&\ds\f{(1+m\check u)(a-\check u)}{b}-\ds\f{(1+m\tilde u)(a-\tilde u)}{b}+\ds\f{e\check u}{1+m\check u}-\ds\f{e\tilde u}{1+m\tilde u}\\
\ge&\left[ \ds\f{1}{b}\left(2m\tilde \la-am+1\right)-e\right](\tilde u-\check u)\\
\ge& \left[ \ds\f{1}{b}\left(ma-1\right)-e\right](\tilde u-\check u),
\end{split}
\end{equation}
which implies that $\tilde u=\check u$ for $a>a_3:=\ds\f{be+1}{m}$.
Therefore, for any $a>a_0=\max\{a_1,a_2,a_3 \}$, system \eqref{DL} has a unique constant positive steady state, which is globally attractive.
\end{proof}
Similarly, we have the global attractivity of the positive equilibrium with respect to saturation $m$.
 \begin{proposition}\label{p44}
 If If $0<d<\ds\f{a}{b}$, then there exists $m_0>0$, depending on $a$, $b$, $d$ and $e$, such that, for any $m>m_0$, system \eqref{DL} has a unique constant positive steady state, which is globally attractive.
\end{proposition}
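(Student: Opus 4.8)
The plan is to mirror the proof of Proposition \ref{p43}, but now to track the dependence on the saturation constant $m$ rather than on $a$, reducing everything to the scalar identity \eqref{fgqu}. Throughout I use $f(u)=\frac{(1+mu)(a-u)}{b}$, $g(u)=\frac{bu}{1+mu}$, $h(v)=d-v$ and $c=e/b$ from \eqref{H1}; then $f(0)=a/b$, so the hypothesis $0<d<a/b$ is exactly $f(0)>d$ (equivalently $h(f(0))<0$), and $h$ satisfies $(\mathbf{A_3})$ since $d>0$. For $m$ large one has $ma>1$, so $f$ is unimodal on $[0,a]$ and $(\mathbf{A_1})$, $(\mathbf{A_2})$ hold, placing us in Case II of Theorem \ref{T2.2}.

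First I would secure the upper–lower solution machinery for all large $m$. The hypothesis $h(f(0))<-cg(a)$ of Lemma \ref{L2.1} reads $d-\frac ab<-\frac{ea}{1+ma}$; since $\frac{ea}{1+ma}<e/m\to0$ while $a/b-d>0$ is fixed, this holds once $m>eb/(a-bd)=:m_1$. Lemma \ref{L2.1} then yields coupled upper and lower solutions, and running the iteration scheme of Theorem \ref{T2.2} produces $(\tilde u,\tilde v)$ and $(\check u,\check v)$ with $0<\check u\le\tilde u$ satisfying \eqref{2.5}, hence the identity \eqref{fgqu}, which here reads
\[
f(\check u)-f(\tilde u)+c\,[g(\check u)-g(\tilde u)]=0 .
\]
It remains to force $\tilde u=\check u$ for large $m$.

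Next I would locate $\check u$ on the decreasing branch of $f$. From \eqref{2.5} and $h^{-1}(y)=d-y$ one gets $f(\check u)=\tilde v=d+\frac{e\tilde u}{1+m\tilde u}<d+e/m$, which is below $a/b=f(0)$ once $m>m_1$. Since $f$ is unimodal with $f(0)=f(\tilde\lambda)$ at the nonzero root $\tilde\lambda=a-\frac1m$ of $f(u)=f(0)$, the inequality $f(\check u)<f(0)$ forces $\check u>\tilde\lambda$, so $[\check u,\tilde u]$ sits in the region where $-f'(u)=\frac1b(2mu+1-ma)\ge\frac1b(ma-1)$. Combining this with the uniform bound $g'(u)=\frac{b}{(1+mu)^2}\le b$, the identity above gives
\[
0=f(\check u)-f(\tilde u)-c\,[g(\tilde u)-g(\check u)]\ge\Big(\tfrac{ma-1}{b}-e\Big)(\tilde u-\check u).
\]
As $\tilde u-\check u\ge0$, taking $m>(be+1)/a$ makes the bracket positive and forces $\tilde u=\check u$. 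Setting $m_0=\max\{eb/(a-bd),\,(be+1)/a\}$ and invoking the upper–lower solution method gives a unique, globally attractive constant positive steady state for all $m>m_0$.

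The main obstacle is precisely the last estimate: I must pin $\check u$ into the decreasing branch of $f$ \emph{uniformly} in $m$ and then show that the favourable steepness of $f$ there overwhelms the adverse growth of $g$. What makes it work is structural rather than computational: as $m\to\infty$ the slope $-f'$ near $u=a$ grows like $ma-1$, whereas $g'$ stays bounded by $b$, so the prey nonlinearity dominates and the iteration bounds collapse to a single point. A minor care point is that $\tilde u$ may slightly exceed $a$ (since $\overline u=a+\epsilon$ in Lemma \ref{L2.1}), but $-f'$ is increasing in $u$, so the lower bound $-f'\ge(ma-1)/b$ persists on all of $[\check u,\tilde u]$.
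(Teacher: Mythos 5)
Your proof is correct and is essentially the argument the paper intends: Proposition \ref{p44} is stated with only the remark ``similarly,'' and your write-up mirrors the paper's proof of Proposition \ref{p43} line by line, replacing the largeness of $a$ by largeness of $m$ (verifying $h(f(0))<-cg(a)$ via $\frac{ea}{1+ma}<e/m$, locating $\check u>\tilde\la=a-\frac{1}{m}$, and using $-f'\ge\frac{ma-1}{b}$ against $cg'\le e$ in identity \eqref{fgqu}). The thresholds you obtain, $m_0=\max\left\{\frac{eb}{a-bd},\frac{be+1}{a}\right\}$, are exactly the analogues of the paper's $a_1,a_2,a_3$, so no further comment is needed.
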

Propositions \ref{p43} and \ref{p44} supplement Theorem 2.3 in \cite{Du-Lou}, which prove the global attractivity of the positive equilibrium for $ma\le1$.
\subsection{A predator-prey model with weak Allee effect in predator and Holling-II functional response}
In this subsection, we consider the following predator-prey model
\begin{equation}\label{4weaka1}
\begin{cases}
 \ds\frac{\partial u}{\partial t}-d_1\Delta u=  u(a-u)-\ds\frac{buv}{1+mu}, & x\in \Omega,\; t>0,\\
 \ds\frac{\partial v}{\partial t}-d_2\Delta v= \beta v(d-v)(v+p)+\ds\frac{euv}{1+mu}, & x\in\Omega,\; t>0,\\
    \partial_\nu u=\partial_\nu v=0,& x\in \partial \Omega,\;
 t>0,\\
u(x,0)=u_0(x)\ge(\not\equiv)0, \;\; v(x,0)=v_0(x)\ge(\not\equiv)0,& x\in\Omega.
\end{cases}
\end{equation}
where $a$, $b$, $d$, $p$, $e$, $m$, $\beta$, $d_1$ and $d_2$ are positive constants, $d>p$, and $\Omega$ is a bounded domain in $\mathbb{R}^N(N\le3)$ with a smooth boundary $\partial\Omega$.
Here $d>p$ means that the growth rate of predator is weak Allee type in the absence of prey.
Letting
\begin{equation}\label{4H1}
  f(u)=\ds\f{(1+mu)(a-u)}{b},\;\;g(u)=\ds\f{bu}{1+mu},\;\;h(v)=\beta(d-v)(v+p),\;\;\text{and}\;\; c=\ds\f{e}{b},
\end{equation}
system \eqref{1.4} is transformed to system \eqref{4weaka1}, and parameter $c$ in system \eqref{1.4} is equivalent to $e$ in system \eqref{4weaka1}. In this case, $f$, $g$ and $h$ satisfy $(\mathbf{A_1})$, $(\mathbf{A_2})$,  $(\mathbf{A'_2})$, $(\mathbf{A_3})$ and $(\mathbf{A_5})$, $v_0=d$, and $h(v)>0$ for $v<v_0$. It follows from Remak \ref{rema} that $h(v)$ satisfies $(\mathbf{A_4})$ if $\ds\f{a}{b}=f(0)>d-p$. Then, from Theorem \ref{mainr} we have the following results on the non-existence of non-constant steady states when conversion rate $e$ is large.
\begin{proposition}
\begin{enumerate}
\item [$(a_1)$] If $a>bd$, then there exists $e_0>0$, depending on $a$, $b$, $d$, $p$, $\beta$, $m$, $d_1$, $d_2$ and $\Omega$, such that system \eqref{4weaka1} has a unique constant positive steady state and no non-constant positive steady states for $e>e_0$.
\item [$(a_2)$] If $a<bd$, then there exists $e_0>0$, depending on $a$, $b$, $d$, $p$, $\beta$, $m$, $d_1$, $d_2$ and $\Omega$, such that system \eqref{4weaka1} has no positive steady states for $e>e_0$.
\end{enumerate}
\end{proposition}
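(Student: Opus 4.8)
The plan is to obtain both parts as direct applications of Theorem \ref{mainr}, after recording that the change of variables \eqref{4H1} places system \eqref{4weaka1} into the general form \eqref{1.4} with conversion rate $c = e/b$. The structural hypotheses $(\mathbf{A'_1})$, $(\mathbf{A'_2})$ and $(\mathbf{A_5})$ demanded by Theorem \ref{mainr} have already been verified in the paragraph preceding the statement; in particular $(\mathbf{A_1})$ holds, which is stronger than $(\mathbf{A'_1})$, and one reads off $v_0 = d$ from \eqref{pu2} since $h(v) = \beta(d-v)(v+p)$ has $d$ as its unique positive root. Because $c$ and $e$ differ only by the positive constant $b$, any threshold $c_0$ produced by Theorem \ref{mainr} translates into $e_0 := b c_0$, and the stated dependence of $e_0$ on $a, b, d, p, \beta, m, d_1, d_2, \Omega$ is inherited from the dependence of $c_0$ on $f, g, h, d_1, d_2, \Omega$ via \eqref{4H1}.

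For part $(a_1)$, the first step is to note that $a > bd$ is equivalent to $f(0) = a/b > d = v_0$, so the hypothesis $f(0) > v_0$ of Theorem \ref{mainr}(1) is met. The remaining requirement is that $h$ satisfy $(\mathbf{A_4})$; by Remark \ref{rema} the weak-Allee rate $h(v) = \beta(d-v)(v+p)$ satisfies $(\mathbf{A_4})$ exactly when $f(0) > d - p$. This is automatic here, since $f(0) > d > d - p$ using $p > 0$. Theorem \ref{mainr}(1) then supplies a threshold $c_0 > 0$ beyond which \eqref{1.4}, and hence \eqref{4weaka1}, has a unique constant positive steady state and no non-constant positive steady states; setting $e_0 = b c_0$ yields the claim for all $e > e_0$.

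For part $(a_2)$, I would instead check the hypotheses of Theorem \ref{mainr}(2). Here $a < bd$ is equivalent to $f(0) = a/b < d = v_0$, so $f(0) < v_0$ holds. The second requirement, $h(v) > 0$ for $v < v_0$, is immediate: for $0 \le v < d$ both factors $(d - v)$ and $(v + p)$ are positive, whence $h(v) = \beta(d-v)(v+p) > 0$. Theorem \ref{mainr}(2) then produces a $c_0 > 0$ such that \eqref{1.4}, and therefore \eqref{4weaka1}, has no positive steady states for $c > c_0$; taking $e_0 = b c_0$ finishes the argument.

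Since the entire analytic content has been isolated in Theorem \ref{mainr}, there is no genuine obstacle in the proof itself; the only points requiring care are the bookkeeping identifications $f(0) = a/b$, $v_0 = d$ and $c = e/b$, together with the verification that the weak-Allee form satisfies $(\mathbf{A_4})$ in the regime $a > bd$. I expect this last verification through Remark \ref{rema} — specifically, recognizing that $a > bd$ forces $f(0) > d > d - p$, so that no separate condition on $p$ need be imposed — to be the one step where a short argument, rather than a pure citation, is needed.
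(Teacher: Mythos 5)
Your proposal is correct and follows exactly the route the paper intends: verify $(\mathbf{A'_1})$, $(\mathbf{A'_2})$, $(\mathbf{A_5})$ for the choices in \eqref{4H1}, identify $v_0=d$ and $c=e/b$, check $(\mathbf{A_4})$ via Remark \ref{rema} (noting $f(0)=a/b>d>d-p$ in case $(a_1)$) and $h(v)>0$ for $v<v_0$ in case $(a_2)$, then cite Theorem \ref{mainr} parts (1) and (2) respectively with $e_0=bc_0$. The paper gives no separate proof beyond precisely this verification-and-citation, so your argument matches it in substance and detail.
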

From Corollary \ref{cc25}, we have the following results on the global attractivity of constant equilibria for small conversion rate, which also imply the non-existence of non-constant steady states.
\begin{proposition}\label{wpp}
 \begin{enumerate}
\item [$(b_2)$] If $d<\ds\f{a}{b}$, then there exists $e_0>0$, depending on  $a$, $b$, $d$, $p$, $\beta$ and $m$, such that, for $e\in(0,e_0)$, system \eqref{4weaka1} has a unique constant positive steady state, which is globally attractive.
\item [$(b_3)$] If $d>\f{(am+1)^2}{4mb}$, then the steady state $(0,d)$ of system \eqref{4weaka1} is globally attractive.
\end{enumerate}
\end{proposition}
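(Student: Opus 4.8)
The plan is to derive both statements as direct applications of Corollary \ref{cc25} to the specific functions produced by the transformation \eqref{4H1}, namely $f(u)=(1+mu)(a-u)/b$, $g(u)=bu/(1+mu)$, $h(v)=\beta(d-v)(v+p)$, with conversion rate $c=e/b$. The text has already recorded that these $f,g,h$ satisfy $(\mathbf{A_1})$, $(\mathbf{A_2})$ and $(\mathbf{A_3})$, so the hypotheses of Corollary \ref{cc25} are in place; the only work left is to translate the two numerical conditions and to convert the conclusions, which are phrased in terms of $c$, back into conditions on $e$.

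For part $(b_2)$, I would first note that $f(0)=a/b$, so the assumption $d<a/b$ is precisely $f(0)>d$ (equivalently $h(f(0))<0$). Corollary \ref{cc25}$(1)$ then yields a threshold $c_0>0$, depending only on $f,g,h$ and hence on $a,b,d,p,\beta,m$, such that system \eqref{1.4} has a unique globally attractive constant positive steady state for every $c\in(0,c_0)$. Since $c=e/b$, setting $e_0:=bc_0$ transfers this conclusion to \eqref{4weaka1} for all $e\in(0,e_0)$, with exactly the advertised parameter dependence.

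For part $(b_3)$, the key computation is the maximum of $f$ on $[0,a]$. Writing $f(u)=\ds\f{-mu^2+(ma-1)u+a}{b}$ and completing the square gives the vertex value $\ds\f{(am+1)^2}{4mb}$, attained at $u=(ma-1)/(2m)$; since this is the unconstrained maximum of a downward parabola, $\max_{u\in[0,a]}f(u)\le\ds\f{(am+1)^2}{4mb}$. Hence the hypothesis $d>\ds\f{(am+1)^2}{4mb}$ forces $\max_{u\in[0,a]}f(u)<d$, and Corollary \ref{cc25}$(2)$ delivers the global attractivity of the predator-only steady state $(0,d)$, the positive root of $h(v)=\beta(d-v)(v+p)$ being $v=d$.

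I expect no genuine obstacle here, as the analytic content is entirely contained in Corollary \ref{cc25}; the remaining steps are the parabola-vertex computation $\max f=\ds\f{(am+1)^2}{4mb}$ and the bookkeeping $e_0=bc_0$. The one point worth stating carefully is that $\ds\f{(am+1)^2}{4mb}\ge a/b$ always (with equality iff $am=1$), which confirms that the threshold in $(b_3)$ dominates the constrained maximum of $f$ regardless of whether the interior critical point $(ma-1)/(2m)$ falls inside $[0,a]$, so the two cases $ma>1$ and $ma\le1$ need not be separated.
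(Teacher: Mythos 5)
Your proposal is correct and follows exactly the paper's route: the paper also obtains Proposition \ref{wpp} as a direct application of Corollary \ref{cc25} to the functions in \eqref{4H1}, with $(b_2)$ coming from $f(0)=a/b>d$ and $(b_3)$ from $\max_{u\in[0,a]}f(u)\le\f{(am+1)^2}{4mb}<d$. Your explicit vertex computation and the observation that $\f{(am+1)^2}{4mb}\ge a/b$ (so the cases $ma>1$ and $ma\le 1$ need no separate treatment) correctly supply the details the paper leaves implicit.
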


\subsection{A predator-prey model with strong Allee effect in predator and Holling-IV functional response}
In this subsection, we consider the following model,
\begin{equation}\label{4weaka}
\begin{cases}
  \ds\frac{\partial u}{\partial t}-d_1\Delta u= u(a-u)-\ds\frac{buv}{1+nu+mu^2}, & x\in \Omega,\; t>0,\\
 \ds\frac{\partial v}{\partial t}-d_2\Delta v=\beta v(d-v)(v-p)+\ds\frac{euv}{1+nu+mu^2}, & x\in\Omega,\; t>0,\\
    \partial_\nu u=\partial_\nu v=0,& x\in \partial \Omega,\;
 t>0,\\
\end{cases}
\end{equation}
which is a diffusive predator-prey model with strong Allee effect in predator. Here $a$, $b$, $d$, $p$, $e$, $m$, $n$, $\beta$, $d_1$ and $d_2$ are positive constants.
Letting
\begin{equation}\label{4H2}
\begin{split}
  &f(u)=\ds\f{(1+nu+mu^2)(a-u)}{b},\;\;c=\ds\f{e}{b},\\
  &g(u)=\ds\f{bu}{1+nu+mu^2},\;\;h(v)=\beta (d-v)(v-p).
  \end{split}
\end{equation}
Then system \eqref{1.4} is transformed to system \eqref{4weaka}, and parameter $c$ in system \eqref{1.4} is equivalent to $e$ in system \eqref{4weaka}.
In this case, $f$, $g$ and $h$ satisfy $(\mathbf{A'_1})$, $(\mathbf{A'_2})$ and $(\mathbf{A_5})$, and $v_0=\max\{d,p\}$. It follows from Remak \ref{rema} that $h(v)$ satisfies assumptions $(\mathbf{A_4})$ if $\ds\f{a}{b}=f(0)>d+p$.  Then from Theorem \ref{mainr}, we have the following results on the non-existence of non-constant steady states when conversion rate $e$ is large.
\begin{proposition}
If $a>b(d+p)$, then there exists $e_0>0$, depending on $a$, $b$, $d$, $p$, $m$, $\beta$, $d_1$, $d_2$ and $\Omega$, such that system \eqref{4weaka} has a unique constant positive steady state and no non-constant positive steady states for any $e>e_0$.
\end{proposition}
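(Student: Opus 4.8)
The plan is to read off this proposition as a direct instance of Theorem \ref{mainr}$(1)$ applied through the substitution \eqref{4H2}, so that the whole argument reduces to (i) checking that the functions
$$f(u)=\ds\f{(1+nu+mu^2)(a-u)}{b},\quad g(u)=\ds\f{bu}{1+nu+mu^2},\quad h(v)=\beta(d-v)(v-p)$$
satisfy every hypothesis of that theorem, and (ii) translating the resulting threshold from $c$ back to $e$. Since the paragraph preceding the proposition already records that $f$, $g$ and $h$ satisfy $(\mathbf{A'_1})$, $(\mathbf{A'_2})$ and $(\mathbf{A_5})$ with $v_0=\max\{d,p\}$, I would only briefly confirm these structural facts: because $1+nu+mu^2>0$ for $u\ge0$, the sign of $f$ is that of $a-u$, so $f$ is positive on $[0,a)$, vanishes only at $a$, and is negative for $u>a$, giving $(\mathbf{A'_1})$; the function $g$ is $C^2$ with $g(0)=0$, $g'(0)=b>0$ and $g(u)>0$ for $u>0$, giving $(\mathbf{A'_2})$; and $-h(v)=\beta v^2-\beta(d+p)v+\beta dp$ is itself a polynomial of degree $2>\ds\f12$ with positive leading coefficient $\beta$, so choosing the upper and lower coefficient sequences in $(\mathbf{A_5})$ to both equal these exact coefficients makes both inequalities hold with equality, establishing $(\mathbf{A_5})$.

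The substantive step is to verify the two threshold conditions of Theorem \ref{mainr}$(1)$, namely $f(0)>v_0$ and that $h$ satisfies $(\mathbf{A_4})$, and to see that both are delivered by the single standing hypothesis $a>b(d+p)$. Here $f(0)=\ds\f{a}{b}$ and $v_0=\max\{d,p\}$, with both roots $d,p$ of $h$ positive. From $a>b(d+p)$ I obtain $f(0)=\ds\f{a}{b}>d+p>\max\{d,p\}=v_0$, using $d,p>0$, so $f(0)>v_0$ holds. For assumption $(\mathbf{A_4})$ I would cite Remark \ref{rema}: the strong Allee form \eqref{sall} satisfies $(\mathbf{A_4})$ exactly when $f(0)=\ds\f{a}{b}>d+p$, which is again the hypothesis.

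With all hypotheses confirmed, Theorem \ref{mainr}$(1)$ furnishes a constant $c_0>0$, depending on $f$, $g$, $h$, $d_1$, $d_2$ and $\Omega$ (equivalently on the coefficients $a$, $b$, $d$, $p$, $m$, $n$, $\beta$ together with $d_1$, $d_2$ and $\Omega$), such that for every $c>c_0$ system \eqref{1.4} possesses a unique constant positive steady state and no non-constant positive steady states. Since \eqref{4H2} gives $c=\ds\f{e}{b}$ and turns \eqref{1.4} into \eqref{4weaka}, the inequality $c>c_0$ is equivalent to $e>e_0:=bc_0$, which yields the claimed conclusion for system \eqref{4weaka}.

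The proof is essentially bookkeeping, and I do not expect any genuine analytic obstacle; the one point deserving care is precisely that $(\mathbf{A_4})$ for the strong Allee nonlinearity $h(v)=\beta(d-v)(v-p)$ requires the full threshold $f(0)>d+p$ rather than merely $f(0)>v_0=\max\{d,p\}$. Thus the hypothesis $a>b(d+p)$ cannot be relaxed to $f(0)>v_0$ alone, and it is this stronger inequality that simultaneously secures $f(0)>v_0$ and $(\mathbf{A_4})$, allowing the clean invocation of Theorem \ref{mainr}$(1)$.
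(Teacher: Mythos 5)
Your proposal is correct and takes essentially the same route as the paper: the paper states this proposition without a separate proof, since the paragraph preceding it performs exactly your verification --- $f$, $g$, $h$ satisfy $(\mathbf{A'_1})$, $(\mathbf{A'_2})$, $(\mathbf{A_5})$ with $v_0=\max\{d,p\}$, Remark \ref{rema} gives $(\mathbf{A_4})$ when $f(0)=\frac{a}{b}>d+p$ --- and then invokes Theorem \ref{mainr}$(1)$ with $c=\frac{e}{b}$. Your added observation that $a>b(d+p)$ is genuinely needed for $(\mathbf{A_4})$ (and not merely for $f(0)>v_0$) is a correct and worthwhile clarification of why the hypothesis takes this form.
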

\section{Conclusions}
In this paper, we consider a general diffusive predator-prey system. It covers a wide range of predator-prey models which include some well-known ones but also some less studied ones. We find that the conversion rate is a key parameter to affect the dynamics of a general predator-prey model, and there are almost no complex patterns for large and small conversion rate. Hence, this phenomenon can occur commonly for predator-prey models, which was found in \cite{Cheny} for a special model with a nonlinear growth rate for the predator.

For the case of small conversion rate, we show the global attractivity of the unique constant positive steady state even when
$h(v)$ is nonmonotonic, and hence it can be applied to  model \eqref{4weaka1} with weak Allee effect in predator. A special case where $h(v)$ is monotonic was analyzed in \cite{Cheny}. We remark that our result is not a direct conclusion from \cite{27,pao,4,5}, and needs some detailed analysis for the relations between the limits of upper solutions sequence and lower solutions sequence. Moreover, our result supplements some existing ones. For example, Propositions \ref{p43} and \ref{p44} supplement Theorem 2.3 in \cite{Du-Lou}, which show the global attractivity of the positive equilibrium for $ma\le1$.

For the case of large conversion rate, we show the nonexistence of the positive steady states. Our method is motivated by \cite{Peng-Shi}, but we need to modify many of their arguments to derive our result. We find that, even with a nonmonotonic functional response, there exist no nonconstant positive steady states for large conversion rate.
Moreover, our result in Theorem \ref{Pen} also supplements Theorem 1.2 of \cite{Peng-Shi}, which show the nonexistence in the case that $e$ equals to $b$ and is sufficiently large.

Finally, we should mention that the dimension $N$ of the domain $\Omega$ we studied is less than three, and this is meaningful in biology.
For a general dimension $N$, we can also obtain the similar results, if $h$ satisfies the following assumption:
\begin{enumerate}
\item [$(\mathbf{A'_5})$]  $h\in C^1(\overline{\mathbb {R}^+})$ and there exist $n\in \mathbb{N}^+$, $\{q_i\}_{i=0}^{n}$, $\{k_i\}_{i=0}^n$ and $\{\overline k_i\}_{i=0}^n$ such that $$\sum_{i=0}^n k_i v^{q_i}\le -h(v)\le \sum_{i=0}^n \overline k_i v^{q_i}\;\;\text{for any}\;\;v\ge0,$$
    where $0= q_0<q_1<q_2<\dots< q_n$, $q_n>\ds\f{N}{2}-1$ and $k_n,\overline k_n>0$.
\end{enumerate}

\end{document}